\newtheorem{thm}{Theorem}[section]
\newtheorem{our_thm}{Theorem}
\newtheorem{cor}[thm]{Corollary}
\newtheorem{lem}[thm]{Lemma}
\newtheorem{conj}[thm]{Conjecture}
\newtheorem{clm}[thm]{Claim}
\theoremstyle{definition}
\newtheorem{defn}[thm]{Definition}
\theoremstyle{remark}
\newtheorem{rem}[thm]{Remark}
\newcommand{\mP}{\mathcal{P}}
\newcommand{\mG}{\mathcal{G}}
\newcommand{\mB}{\mathcal{B}}
\newcommand{\mD}{\mathcal{D}}
\newcommand{\mA}{\mathcal{A}}
\newcommand{\mL}{\mathcal{L}}
\newcommand{\bd}{\mathbf{d}}
\newcommand{\wbd}{\widetilde{\bd}}
\newcommand{\bk}{\mathbf{k}}
\newcommand{\Gnp}[2]{\mG(#1,#2)}
\newcommand{\GNp}[1]{\Gnp{n}{#1}}
\newcommand{\GNP}{\GNp{p}}
\newcommand{\Hamiltonicity}{\mathcal{HAM}}
\newcommand{\Prob}[1]{\Pr\left[#1\right]}
\newcommand{\cProb}[2]{\Pr\left[ \left. #1 \;\right\vert #2 \right]}
\newcommand{\sExp}[2]{\mathbf{E}_{#1}\left[#2\right]}
\newcommand{\Exp}[1]{\sExp{}{#1}}
\newcommand{\Bin}[2]{\mathbf{Bin}\left(#1,#2\right)}
\title{On the resilience of Hamiltonicity and optimal packing of Hamilton cycles in random graphs}
\author{
Sonny Ben-Shimon\thanks{School of Computer Science, Raymond and Beverly Sackler Faculty of Exact Sciences, Tel Aviv University, Tel Aviv 69978, Israel. E-mail: sonny.benshimon@cs.tau.ac.il. Research partially supported by a Farajun Foundation Fellowship.}
\and Michael Krivelevich\thanks{School of Mathematical Sciences, Raymond and Beverly Sackler Faculty of Exact Sciences, Tel Aviv University, Tel Aviv 69978, Israel. E-mail: krivelev@tau.ac.il. Research supported in part by a USA-Israel BSF grant and by a grant from the Israel Science Foundation.}
\and Benny Sudakov\thanks{Department of Mathematics, UCLA, Los Angles 90005, CA, USA. Email: bsudakov@math.ucla.edu. Research supported in part by NSF CAREER award DMS-0812005 and by USA-Israeli BSF grant.}}
\date{}
\begin{document}
\maketitle
\begin{abstract}
Let $\bk=(k_1,\ldots,k_n)$ be a sequence of $n$ integers. For an increasing monotone graph property $\mP$ we say that a base graph $G=([n],E)$ is \emph{$\bk$-resilient} with respect to $\mP$ if for every subgraph $H\subseteq G$ such that $d_H(i)\leq k_i$ for every $1\leq i\leq n$ the graph $G-H$ possesses $\mP$. This notion naturally extends the idea of the \emph{local resilience} of graphs recently initiated by Sudakov and Vu. In this paper we study the $\bk$-resilience of a typical graph from $\GNP$ with respect to the Hamiltonicity property where we let $p$ range over all values for which the base graph is expected to be Hamiltonian. In particular, we prove that for every $\varepsilon>0$ and $p\geq\frac{\ln n+\ln\ln n +\omega(1)}{n}$ if a graph is sampled from $\GNP$ then with high probability removing from each vertex of ``small'' degree all incident edges but two and from any other vertex at most a $(\frac{1}{3}-\varepsilon)$-fraction of the incident edges will result in a Hamiltonian graph. 

Considering this generalized approach to the notion of resilience allows to establish several corollaries which improve on the best known bounds of Hamiltonicity related questions. It implies that for every positive $\varepsilon>0$ and large enough values of $K$, if $p>\frac{K\ln n}{n}$ then with high probability the local resilience of $\GNP$ with respect to being Hamiltonian is at least $(1-\varepsilon)np/3$, improving on the previous bound for this range of $p$. Another implication is a result on optimal packing of edge disjoint Hamilton cycles in a random graph. We prove that if $p\leq\frac{1.02\ln n}{n}$ then with high probability a graph $G$ sampled from $\GNP$ contains $\lfloor\frac{\delta(G)}{2}\rfloor$ edge disjoint Hamilton cycles, extending the previous range of $p$ for which this was known to hold.
\end{abstract}
\section{Introduction}\label{s:Introduction}
A \emph{Hamilton cycle} in a graph is a simple cycle that traverses all vertices of the graph. The study whether a graph contains a Hamilton cycle, or the \emph{Hamiltonicity} graph property, became one of the main themes of graph theory from the very beginning. Deciding whether a graph is \emph{Hamiltonian} was one of the first problems that were proved to be \textsc{NP-Complete}, giving some insight on the elusiveness of this problem. Although the computational problem of determining in ``reasonable'' time whether a graph is Hamiltonian seems hopeless the road does not need to stop there, as there are many other natural and interesting questions about this property. For one, given a prescribed number of edges $m$, are most graphs on $n$ vertices with $m$ edges Hamiltonian? Hence, the study of Hamiltonicity of \emph{random graphs} seems like a natural approach to pursue along the path to understand this intricate property. 

Fixing $m$ and selecting uniformly at random a graph on $n$ vertices with $m$ edges was indeed the original random graph model introduced by Erd\H{o}s and R\'{e}nyi, but the most widely studied random graph model is the binomial random graph, $\GNP$. In this model we start with $n$ vertices, labeled, say, by $1,\ldots,n$, and select a graph on these $n$ vertices by going over all $\binom{n}{2}$ pairs of vertices, deciding independently with probability $p$ for a pair to be an edge. The product probability space nature gives this model a greater appeal than the original one, but they are indeed very much related, and in a sense, equivalent (see monographs \cite{Bol2001} and \cite{JanLucRuc2000} for a thorough introduction to the subject of random graphs). We note that we will sometimes abuse the notation and use $\GNP$ to denote both the distribution on the graphs just described and a random sample from this distribution; which of the two should be clear from the context.  

We denote by $\Hamiltonicity$ the graph property of having a Hamilton cycle. One of the cornerstone results in the theory of random graphs is that of Bollob\'{a}s \cite{Bol83} and of Koml\'os and Szemer\'edi \cite{KomSze83} who proved that if $G\sim\GNP$ for $p\geq\frac{\ln n+\ln\ln n +\omega(1)}{n}$ (where $\omega(1)$ is any function tending to infinity with the number of vertices, $n$) then with high probability (or w.h.p. for brevity)\footnote{In this paper, we say that a sequence of events $\mA_n$ in a random graph model occurs w.h.p. if the probability of $\mA_n$ tends to $1$ as the number of vertices $n$ tends to infinity.} $G\in\Hamiltonicity$. It is fairly easy to show (see e.g. \cite[Chapter 3]{Bol2001}) that if $p\leq\frac{\ln n+\ln\ln n-\omega(1)}{n}$ then w.h.p. $G$ contains at least one vertex of degree smaller than $2$ (and hence is not Hamiltonian). Not only does this give a very precise range of $p$ for which the typical graph from $\GNP$ is Hamiltonian, this idea suggests that the bottleneck for $\Hamiltonicity$ (at least for small values of $p$) stems from the vertices of low degree.
\subsection{Types of resilience}
Let $\mP$ be a monotone increasing graph property (i.e. a family of graphs on the same vertex set which is closed under the addition of edges and isomorphism). Counting the minimal number of edges one needs to remove from a base graph $G$ in order to obtain a graph not in $\mP$ may, arguably, seem like one of the most natural questions to consider. Indeed, this notion, which is now commonly denoted by the \emph{global resilience} of $G$ with respect to $\mP$ (or the \emph{edit distance} of $G$ with respect to $\overline{\mP}$) is one of the fundamental questions in extremal combinatorics. This field can be traced back to the celebrated theorem of Tur\'{a}n \cite{Tur41} which states (in this terminology) that the complete graph $K_n$ on $n$ vertices has global resilience $\frac{n}{2}(\frac{n}{r}-1)$ (assuming $r$ divides $n$) with respect to containing a copy of $K_{r+1}$. 

For some graph properties of global nature, such as $\Hamiltonicity$ or being connected, the removal of all edges incident to a vertex of minimum degree is enough to destroy them, hence supplying a trivial upper bound on the global resilience of any graph with respect to these properties. For such properties the notion of global resilience does not seem to convey what one would expect from such a distance measure. One would like to gain some control on the amount of edges incident to a single vertex that can be removed. To pursue this approach, for a given base graph $G=([n],E)$ one would like to gain better understanding of the possible degree sequences of subgraphs $H$ of $G$ for which the graph $G-H$ possesses the property $\mP$.

Let $\mathbf{a}=(a_1,\ldots, a_n)$ and $\mathbf{b}=(b_1,\ldots, b_n)$ be two sequences of $n$ numbers. We write $\mathbf{a}\leq\mathbf{b}$ if $a_i\leq b_i$ for every $1\leq i\leq n$. Given a graph $G$ on vertex set $[n]$, we denote its degree sequence by $\bd_G=(d_G(1),\ldots,d_G(n))$. 
\begin{defn}
Let $G=([n],E)$ be a graph. Given a sequence $\bk=(k_1,\ldots,k_n)$ and a monotone increasing graph property $\mP$, we say that $G$ is $\bk$-resilient with respect to the property $\mP$ if for every subgraph $H\subseteq G$ such that $\bd_H\leq\bk$, we have $G-H\in\mP$.
\end{defn}
  
It was Sudakov and Vu \cite{SudVu2008} who initiated the systematic study of such a notion, albeit stated a little differently. In their original work the object of study was the minimum value of the maximum degree of a non $\bk$-resilient sequence. They coined this parameter as the \emph{local resilience} of a graph with respect to $\mP$. We will use the following notation to denote this parameter
$$r_\ell(G,\mP)=\min\{r:\exists H\subseteq G\;\mbox{such that}\;\Delta(H)=r\;\mbox{and}\; G\setminus H\notin \mP\}.$$
So, for local resilience, there is a uniform constraint on the number of deletions of edges incident to a single vertex. Although not explicitly, the study of local resilience lays in the heart of previous results in classical graph theory. In fact, one of the cornerstone results in the study of $\Hamiltonicity$ is the classical theorem of Dirac (see, e.g., \cite[Theorem 10.1.1]{Die2005}) which states (in this terminology) that $K_n$ has local resilience $n/2$ with respect to $\Hamiltonicity$. As has already been pointed out in \cite{SudVu2008} there seems to be a duality between the global or local nature of the graph property at hand and the type of resilience that is more natural to consider. More specifically, global resilience seems to be a more appropriate notion for studying \emph{local} properties (e.g. containing a copy of $K_k$), whereas for global properties (e.g. being Hamiltonian), the study of local resilience appears to be more natural. 

This study of resilience has gained popularity, and in a relatively short period of time quite a few research papers studied this and related distance notions \cite{SudVu2008, FriKri2008, DelEtAl2008, KriLeeSud2010, BalCsaSam2011, BenKriSudPre, AloSudPre, BotKohTarPre, LeeSamPre, HuaLeeSudPre,BalLeeSamPre}. This evolving body of research explored the resilience with respect to many graph properties, where the base graph of focus was mainly the binomial random graph $\GNP$ and graphs from families of pseudo-random graphs. In one of the subsequent papers of Dellamonica et. al. \cite{DelEtAl2008}, a more refined version of local resilience was considered. Consider a graph $G$ with degree sequence $\bd$. The authors of \cite{DelEtAl2008} defined the local resilience of $G$ with respect to $\mP$ as the maximal value of $0\leq \alpha\leq 1$ for which a graph $G$ $(\alpha\bd)$-resilient with respect to $\mP$. This notion is a little more robust than the original definition of local resilience as it can also deal with degree sequences of irregular graphs. The term $\bk$-resilience is a further generalization on the same theme. The main motivation for studying this more general notion is that it allows to give a unified approach to several problems as will be exposed.

\subsection{Previous work}\label{ss:PrevWork}
One of the first local resilience problems Sudakov and Vu \cite{SudVu2008} considered was the local resilience problem of $\GNP$ with respect to $\Hamiltonicity$. They provided both an upper and lower bound for this parameter for almost all the range of $p$. First, note that by the threshold probability for minimum degree $2$, the local resilience parameter becomes interesting only for $p\geq \frac{\ln n+\ln\ln n +\omega(1)}{n}$ (as for lower values of $p$, it is by definition equal to zero for non-Hamiltonian graphs, which w.h.p. is the case in this range). As an upper bound they proved that for every $0\leq p\leq1$ w.h.p. 
\begin{equation}\label{e:LocResHamLowBound}
r_\ell(\GNP,\Hamiltonicity)\leq\frac{np}{2}(1+o(1))
\end{equation}
Maybe more importantly, the lower bound they proved states that for every $\delta,\varepsilon>0$, if $p\geq\frac{\ln^{2+\delta} n}{n}$ then w.h.p. $r_\ell(\GNP,\Hamiltonicity)\geq\frac{np}{2}(1-\varepsilon)$ which essentially settles the problem for this range of $p$. Note that in fact this can be viewed as a far reaching generalization of Dirac's theorem. Frieze and Krivelevich in \cite{FriKri2008} studied this problem for the range of $p$ ``shortly after'' $\GNP$ becomes Hamiltonian w.h.p., but the lower bound they obtained in this range is weaker. They proved that there exist absolute constants $\alpha,C>0$ such that for every $p\geq\frac{C\ln n}{n}$ w.h.p. $r_\ell(\GNP,\Hamiltonicity)\geq\alpha np$. Recently, the authors in \cite{BenKriSudPre} improved on the above by showing that for every $\varepsilon>0$ there exists an absolute constant $C>0$ such that if $p\geq\frac{C\ln n}{n}$ then w.h.p. $r_\ell(\GNP,\Hamiltonicity)\geq\frac{np}{6}(1-\varepsilon)$. It is plausible that w.h.p. $r_\ell(\GNP,\Hamiltonicity)=\frac{np}{2}(1\pm o(1))$ as soon as $p\gg\frac{\log n}{n}$, but the above mentioned results still leave a gap to fill. In this work we make some progress on this front, but, alas, we are unable to close the gap completely.

A related question is the number of edge-disjoint Hamilton cycles one can have in a graph. Nash-Williams \cite{Nas71} asserted that Dirac's sufficient condition for a Hamilton cycle in fact guarantees at least $\lfloor\frac{5n}{224}\rfloor$ edge-disjoint Hamilton cycles. Quite recently, Christofides et. al. \cite{ChrEtAlPre}, answering one of Nash-Williams' original conjectures asymptotically, proved that minimum degree $\left(\frac{1}{2} + o(1)\right)n$ is sufficient for the existence of $\left\lfloor\frac{n}{8}\right\rfloor$ edge-disjoint Hamilton cycles in a graph. When considering random graphs, the current knowledge about packing of edge-disjoint Hamilton cycle is even more satisfactory. Bollob\'{a}s and Frieze \cite{BolFri85} showed that for every fixed $r$, if $p\geq\frac{\ln n+(2r-1)\ln\ln n+\omega(1)}{n}$, the minimal $p$ for which $\delta(\GNP)\geq 2r$, one can typically find $r$ edge-disjoint Hamilton cycles in $\GNP$. Kim and Wormald \cite{KimWor2001} established a similar result for random $d$-regular graphs (for fixed $d$), proving that such graphs typically contain $\lfloor\frac{d}{2}\rfloor$ edge-disjoint Hamilton cycles. The previous statements are of course best possible, but invite the natural question of what happens when the minimum degree is allowed to grow with $n$. Denote by $\mathcal{H}_\delta$ the property of a graph $G$ to contain $\lfloor \delta(G)/2\rfloor$-edge disjoint Hamilton cycles. Frieze and Krivelevich in \cite{FriKri2008}, extending \cite{BolFri85}, showed that if $p\leq\frac{(1+o(1))\ln n}{n}$ then w.h.p. $\GNP\in\mathcal{H}_\delta$. They even conjectured that this property is in fact typical for the whole range of $p$.
\begin{conj}[Frieze and Krivelevich \cite{FriKri2008}]\label{c:HamPackGnp}
For every $0\leq p(n)\leq 1$, w.h.p. $\GNP$ has the $\mathcal{H}_\delta$ property.
\end{conj}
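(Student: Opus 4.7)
The plan is to reduce the edge-disjoint Hamilton packing question to an iterated resilience statement for $\Hamiltonicity$. Concretely, sample $G\sim\GNP$, let $\delta=\delta(G)$, and try to extract $\lfloor\delta/2\rfloor$ Hamilton cycles one at a time, verifying after each extraction that the leftover graph is still Hamiltonian. After the first $i$ cycles have been removed every vertex has lost exactly $2i$ edges, so showing that a Hamilton cycle exists in what remains is equivalent to a $\bk$-resilience statement on $G$ with a uniform bound of $2i$ per vertex. Thus a sufficiently strong $\bk$-resilience theorem covering all the degree budgets that arise across the process would settle the conjecture in one stroke.

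For the range $p\leq\frac{1.02\ln n}{n}$ addressed by this paper, the $\bk$-resilience theorem announced in the abstract --- which allows removing all but two incident edges at vertices of ``small'' degree and a $(\frac{1}{3}-\varepsilon)$-fraction elsewhere --- is tailored precisely to supply the required guarantee. First I would fix the set $L\subseteq[n]$ of ``bad'' vertices, namely those whose degree in $G$ is close to $\delta$, and verify the standard facts that w.h.p.\ in this range $L$ is tiny, spans no edges, and its elements lie at large pairwise graph-distance in $G$. Next, at step $i$ I would invoke the resilience theorem with the vector $\bk$ defined by $k_v=d_G(v)-2$ for every $v\in L$ (allowing these delicate vertices to retain the two edges needed for the Hamilton cycle through $v$ at a later step) and $k_v=\delta$ for $v\notin L$. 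The latter is within the allowed $(\frac{1}{3}-\varepsilon)d_G(v)$ budget precisely because for $p\leq\frac{1.02\ln n}{n}$ a short calculation shows that the minimum degree ratio $\delta(G)/np$ is w.h.p.\ a small constant strictly smaller than $\frac{1}{3}$; this is the arithmetic source of the constant $1.02$.

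The main obstacle blocking the \emph{full} conjecture is that as $p$ grows, $\delta$ approaches $np$, so the $(\frac{1}{3}-\varepsilon)$ budget in the resilience theorem falls badly short of the $\approx\delta$ deletions per vertex forced by removing $\lfloor\delta/2\rfloor$ Hamilton cycles. Closing the gap would seemingly require upgrading the $(\frac{1}{3}-\varepsilon)$ fraction to the trivially optimal $(\frac{1}{2}-o(1))$ hinted at by the upper bound \eqref{e:LocResHamLowBound}, i.e.\ an essentially tight local-resilience theorem for $\Hamiltonicity$ in $\GNP$ across the whole range of $p$ --- itself a long-standing open question. For large (say constant) $p$ one could try to bypass pure resilience by combining it with rotation-extension/P\'osa arguments on the pseudo-random graph left after a near-optimal packing has already been extracted. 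Whichever route is taken, the delicate handling of the low-degree vertices --- which must reserve exactly two incident edges for the Hamilton cycle through them in the final iteration --- appears to be unavoidable throughout.
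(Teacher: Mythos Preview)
The statement you are addressing is a \emph{conjecture}; the paper does not prove it and says explicitly that it ``remains open.'' What the paper does prove is the partial case $p\leq\frac{1.02\ln n}{n}$ (Theorem~\ref{t:HamPackGnp}). Your write-up is, appropriately, not a proof of the full conjecture either: you outline the iterated-extraction strategy, observe that it succeeds for $p\leq\frac{1.02\ln n}{n}$ via the $\bk$-resilience theorem, and then correctly identify the obstruction for larger $p$ (the $(\tfrac13-\varepsilon)$ budget falls short of the needed $\approx\delta$ deletions once $\delta\approx np$). On all of this you agree with the paper.

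For the partial range your argument is essentially the paper's proof of Theorem~\ref{t:HamPackGnp}, routed slightly differently. The paper passes through Theorem~\ref{t:HamResGNPsmall}: for $p\leq\frac{1.02\ln n}{n}$ one has w.h.p.\ $\delta(G)-2\leq\frac{np}{100}(\tfrac13-\varepsilon)$, hence the uniform local-resilience bound $r_\ell(G,\Hamiltonicity)=\delta(G)-1$ holds, and removing $i\leq\lfloor\delta/2\rfloor-1$ Hamilton cycles deletes a $2i$-regular subgraph with $2i\leq\delta-2$. You instead invoke Theorem~\ref{t:HamResGnp} directly with a tailored $\bk$. One caution: the set of ``small'' vertices in Theorem~\ref{t:HamResGnp} is fixed to be $\mD_{np/100}(G)$, not something you may choose; your $L$ (``vertices close to $\delta$'') would in general be a strict subset, and for $v\in\mD_{np/100}\setminus L$ the budget $(\tfrac13-\varepsilon)d_G(v)$ can be smaller than $\delta$. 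If you simply take $L=\mD_{np/100}$ and use the uniform bound $2i\leq\delta-2$, the two constraints become $\delta-2\leq d_v-2$ (automatic) and $\delta-2\leq(\tfrac13-\varepsilon)\cdot\frac{np}{100}$, the latter being exactly the arithmetic in the proof of Theorem~\ref{t:HamResGNPsmall} that produces the constant $1.02$. With that adjustment your argument coincides with the paper's.
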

In this paper, we are able to extend the range of $p$ for which Conjecture \ref{c:HamPackGnp} holds, but cannot resolve the conjecture completely.

\subsection{Our Results}\label{ss:OurResults}
As previously mentioned, in this work we explore the notion of $\bk$-resilience of random graphs with respect to $\Hamiltonicity$. To state our main result we need the following notation. Let $G=([n],E)$ be a graph. For every positive $t$ we denote by $\mD_t=\mD_t(G)=\{v\in [n]\;:\; d_G(v)<t\}$ the subset of vertices of degree less than $t$. Denote by $\bd=(d_1,\ldots, d_n)$ the degree sequence of $G$. For every constant $\varepsilon>0$ and $t>0$ we define the (not necessarily integral) sequence $\wbd(t,\varepsilon)=(\widetilde{d}_1,\ldots,\widetilde{d}_n)$ as follows: 
\begin{enumerate}
\item $\widetilde{d}_v=d_v-2$ for every $v\in \mD_t(G)$;
\item $\widetilde{d}_v=d_v\left(\frac{1}{3}-\varepsilon\right)$ for every $v\in V_1=V\setminus \mD_t(G)$.
\end{enumerate}

\begin{our_thm}\label{t:HamResGnp}
For every $\varepsilon>0$ and $p\geq\frac{\ln n+\ln\ln n+\omega(1)}{n}$ w.h.p. $G=([n],E)\sim\GNP$ with degree sequence $\bd$ is $\wbd(\frac{np}{100}, \varepsilon)$-resilient with respect to $\Hamiltonicity$.
\end{our_thm}
Note that Theorem \ref{t:HamResGnp} essentially covers the whole range of relevant values of $p$. Moreover, if $p$ is such that there exists some $\varepsilon>0$ for which w.h.p. $\delta(\GNP)-2\leq\frac{np}{100}\left(\frac{1}{3}-\varepsilon\right)$, then the result gives an exact local resilience for this range. On the one hand, every graph $G$ satisfies $r_\ell(G,\Hamiltonicity)\leq\delta(G)-1$ as in order to leave the graph Hamiltonian after the deletion of edges, all degrees in the resulting graph must be at least $2$. On the other hand, Theorem \ref{t:HamResGnp} guarantees that in this range of $p$ if $G\sim\GNP$ then w.h.p. removing any subgraph $H\subseteq G$ of maximum degree $\Delta(H)\leq\delta(G)-2$ leaves a Hamiltonian graph.
\begin{our_thm}\label{t:HamResGNPsmall}
If $\frac{\ln n+\ln\ln n+\omega(1)}{n}\leq p\leq\frac{1.02\ln n}{n}$ and $G\sim\GNP$ then w.h.p. $r_\ell(G,\Hamiltonicity)=\delta(G)-1$.
\end{our_thm}
\begin{proof}
As was previously mentioned, in light of Theorem \ref{t:HamResGnp} it is enough to prove that for the given range of $p$ w.h.p. $\delta(\GNP)-2\leq\frac{np}{100}\left(\frac{1}{3}-\varepsilon\right)$ for some $\varepsilon >0$. We use a basic result in the theory of random graphs due to Bollob\'{a}s (see e.g. \cite[Chapter 3]{Bol2001}) which asserts that if $\binom{n-1}{k}p^k(1-p)^{n-1-k}=\omega\left(\frac{1}{n}\right)$, then w.h.p. $\delta(\GNP)\leq k$. Note that if $k$ is such that w.h.p. $\delta(\GNP)\leq k$, then this is also true for every $p'\leq p$ due to monotonicity. Setting $k=\frac{\ln n}{300}$ it suffices to prove that for $p=\frac{1.02\ln n}{n-k}$, w.h.p. $\delta(\GNP)\leq k\leq\frac{np}{100}\left(\frac{1}{3}-\frac{1}{160}\right)+2$, which follows from
\begin{eqnarray*}
\binom{n-1}{k}p^k(1-p)^{n-1-k} &\geq& (1-o(1))\left(\frac{e(n-k)p}{k}\right)^k\cdot e^{-(n-k)p}\\
&\geq&(1-o(1))n^{\frac{1+\ln 306}{300}}\cdot n^{-1.02}\\
&=&\omega\left(\frac{1}{n}\right).
\end{eqnarray*}
This completes the proof of the theorem.
\end{proof}
Comparing Theorem \ref{t:HamResGNPsmall} with the upper bound from \eqref{e:LocResHamLowBound} we see that the local resilience drops from being equal to one less than the minimal degree in the beginning of the range to being equal to less than roughly half of it as $p$ grows. This is due to the fact that when $p$ is small enough the appearances of vertices whose degree is much smaller than the average degree create a bottleneck for $\Hamiltonicity$ (and many other graph properties).

Not all is lost when $p$ becomes larger. Our main result implies that taking $p$ to be large enough such that w.h.p. there are no vertices of degree less than $\frac{np}{100}$, the removal of almost one third of the incident edges at every vertex from a typical random graph will leave a graph which is Hamiltonian. Again, using straightforward calculations, which we omit, the following result is readily established.
\begin{our_thm}\label{t:HamLocResGNPbig}
For every $\varepsilon>0$ there exists a constant $C=C(\varepsilon)>0$ such that if $p\geq\frac{C\ln n}{n}$ then w.h.p. $$r_\ell(\GNP,\Hamiltonicity)\geq \frac{np}{3}\left(1-\varepsilon\right).$$
\end{our_thm}
Note that Theorem \ref{t:HamLocResGNPbig} in fact improves on the best known results of \cite{BenKriSudPre} for the local resilience of $\GNP$ with respect to $\Hamiltonicity$ in this range of $p$.

Lastly we show how from Theorem \ref{t:HamResGnp} one can obtain the existence of an optimal packing of Hamilton cycles in a typical random graph.
\begin{our_thm}\label{t:HamPackGnp}
For every $p\leq\frac{1.02\ln n}{n}$ w.h.p. $\GNP$ has the property $\mathcal{H}_\delta$.
\end{our_thm}
\begin{proof}
First note that in light of the result of Frieze and Krivelevich \cite{FriKri2008}, we may assume that $p\geq(1+o(1))\frac{\ln n}{n}$. We claim that in the range $(1+o(1))\frac{\ln n}{n}\leq p\leq \frac{1.02\ln n}{n}$ w.h.p. one can sequentially remove a Hamilton cycle from a typical graph $G\sim\GNP$ for $\lfloor\delta(G)/2\rfloor-1$ rounds leaving the graph Hamiltonian. Indeed, assume that the assertion of Theorem \ref{t:HamResGNPsmall} holds, then the removal of $0\leq i\leq\lfloor\delta(G)/2\rfloor-1$ edge-disjoint Hamilton cycles from $G$ is a removal of a $2i$-regular subgraph from $G$, and therefore Theorem \ref{t:HamResGNPsmall} asserts that the resulting graph must be Hamiltonian. The removal of the last Hamilton cycle concludes the proof.  
\end{proof}
Although the improvement of Theorem \ref{t:HamPackGnp} relative to the previous best known bound on $p$ of Frieze and Krivelevich \cite{FriKri2008}, may seem quite insignificant, it should be stressed that the method used in \cite{FriKri2008} cannot be made to work for $p=\left(1+\varepsilon\right)\frac{\ln n}{n}$ for any fixed $\varepsilon>0$, so the improvement presented here is more of a qualitative nature. Alas, the methods presented here too cannot be extended much further, as the degree sequence of the random graph becomes more balanced causing $\mD_{\frac{np}{100}}$ to be the empty set. So, Conjecture \ref{c:HamPackGnp} remains open.
\subsection{Organization}
The rest of the paper is organized as follows. We start with Section \ref{s:Preliminaries} where we state all the needed preliminaries that are used throughout the proofs of our results. Section \ref{s:Hamiltonicity} is devoted to showing why a graph with pseudorandom properties is in fact $\bk$-resilient to $\Hamiltonicity$, and Section \ref{s:Random} is dedicated to prove that all of the random-like properties needed in the previous section appear w.h.p. in $\GNP$. Section \ref{s:ProofHamResGnp} is devoted to the proof of the main result of this paper, namely, Theorem \ref{t:HamResGnp}, and we conclude the paper with some final remarks and open questions in Section \ref{s:Conclusion}

\section{Preliminaries}\label{s:Preliminaries}
In this section we provide the necessary background information needed in the course of the proofs of the main results of this paper. 
\subsection{Notation}\label{ss:Notation}
Given a graph $G=(V,E)$, the \emph{neighborhood} $N_G(U)$ of a subset $U\subseteq V$ of vertices is the set of vertices defined by $N_G(U)=\{v\notin U\;:\;\exists u\in U.\;\{v,u\}\in E\}$, and the degree of a vertex $v$ is $d_G(v)=|N_G(\{v\})|$. We denote by $E_G(U)$ the set of edges of $G$ that have both endpoints in $U$, and by $e_G(U)$ its cardinality. Similarly, for two disjoint subsets of vertices $U$ and $W$, $E_G(U,W)$ denotes the set of edges with an endpoint in $U$ and the other in $W$, and $e_G(U,W)$ its cardinality. We will sometime refer to $e_G(\{u\},W)$ by $d_G(u,W)$. We use the usual notation of $\Delta(G)$ and $\delta(G)$ to denote the respective maximum and minimum degrees in $G$. We say that $H$ is a \emph{spanning subgraph} of $G$ (or simply a subgraph, as all the subgraphs we consider will be spanning), and write $H\subseteq G$ if the graph $H=(V,F)$ has the same vertex set as $G$ and its edge set satisfies $F\subseteq E$. We will denote by $\ell(G)$ the length of a longest path in $G$.

Let $R<n$ be positive integers and $f:[R]\rightarrow\mathbb{R}^+$. We say that a graph $G=(V,E)$ on $n$ vertices is a $(R,f)$-\emph{expander} if every $U\subseteq V$ of cardinality $|U|\leq R$ satisfies $|N_G(U)|\geq f(|U|)\cdot|U|$. When $f$ is a constant function equal to some $\beta>0$ we say that $G$ is a $(R,\beta)$-expander. When a function $f:A\rightarrow\mathbb{R}^+$ satisfies $f(a)\geq c$ for any $a\in A$, where $c\geq 0$ is a constant, we simply write $f\geq c$.
\begin{rem}\label{r:supset_fkexpander}
Note that if $G=(V,E)$ is an $(R,f)$-expander, then every $H=(V,F)$ for $F\supseteq E$ is also an $(R,f)$-expander.
\end{rem}

The main research interest of this paper is the asymptotic behavior of some properties of graphs, when the graph is sampled from some probability measure $\mG$ over a set of graphs on the same vertex set $[n]$, and the number of vertices, $n$, grows to infinity. Therefore, from now on and throughout the rest of this work, when needed we will always assume $n$ to be large enough. We use the usual asymptotic notation. For two functions of $n$, $f(n)$ and $g(n)$, we denote $f=O(g)$ if there exists a constant $C>0$ such that $f(n)\leq C\cdot g(n)$ for large enough values of $n$; $f=o(g)$ or $f\ll g$ if $f/g\rightarrow 0$ as $n$ goes to infinity; $f=\Omega(g)$ if $g=O(f)$; $f=\Theta(g)$ if both $f=O(g)$ and $g=O(f)$.

Throughout the paper we will need to employ bounds on large deviations of random variables. We will mostly use the following well-known bound on the lower and the upper tails of the binomial distribution due to Chernoff (see e.g. \cite[Appendix A]{AloSpe2008}).
\begin{thm}[Chernoff bounds]\label{t:Chernoff}
Let $X\sim\Bin{n}{p}$, then for every $\alpha>0$
\begin{enumerate}
\item\label{i:Chernoff1} $\Prob{X > (1+\alpha)np}<\exp(-np((1+\alpha)\ln(1+\alpha)-\alpha))$;
\item\label{i:Chernoff2} $\Prob{X < (1-\alpha)np}<\exp(-\frac{\alpha^2np}{2})$; 
\end{enumerate}
\end{thm}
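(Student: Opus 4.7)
The plan is to prove both tail bounds by the standard Chernoff--Cramér method: writing $X=\sum_{i=1}^n X_i$ with $X_i$ independent Bernoulli$(p)$, and applying Markov's inequality to the moment generating function $e^{tX}$ for a parameter $t$ to be optimized. The key identity is $\Exp{e^{tX}}=\prod_{i=1}^n\Exp{e^{tX_i}}=(1-p+pe^t)^n$, together with the elementary inequality $1-p+pe^t\leq \exp(p(e^t-1))$, which gives $\Exp{e^{tX}}\leq \exp(np(e^t-1))$.

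For the upper tail, I would take $t>0$ and write
\[
\Prob{X>(1+\alpha)np}=\Prob{e^{tX}>e^{t(1+\alpha)np}}\leq e^{-t(1+\alpha)np}\Exp{e^{tX}}\leq \exp\bigl(np(e^t-1)-t(1+\alpha)np\bigr).
\]
Differentiating the exponent in $t$ and setting the derivative to zero yields the optimal choice $t=\ln(1+\alpha)$, which upon substitution produces exactly $\exp\bigl(-np((1+\alpha)\ln(1+\alpha)-\alpha)\bigr)$, establishing item \ref{i:Chernoff1}. The inequality is strict because Markov's inequality on a non-constant random variable with positive density near zero is strict.

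For the lower tail I would run the same computation with $t<0$: applying Markov to $e^{tX}$ and $e^{t(1-\alpha)np}$ (noting the inequality direction flips because $t<0$) gives
\[
\Prob{X<(1-\alpha)np}\leq \exp\bigl(np(e^t-1)-t(1-\alpha)np\bigr),
\]
optimized at $t=\ln(1-\alpha)<0$, producing the ``exact'' bound $\exp\bigl(-np((1-\alpha)\ln(1-\alpha)+\alpha)\bigr)$. To conclude item \ref{i:Chernoff2} in the cleaner form stated, it remains to verify the real-analytic inequality
\[
(1-\alpha)\ln(1-\alpha)+\alpha\geq \tfrac{\alpha^2}{2}\qquad\text{for }0<\alpha<1,
\]
which is immediate from the Taylor expansion $(1-\alpha)\ln(1-\alpha)+\alpha=\sum_{k\geq 2}\frac{\alpha^k}{k(k-1)}$, every term of which is non-negative and whose leading term is $\alpha^2/2$. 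For $\alpha\geq 1$ the left-hand side of \ref{i:Chernoff2} is zero (since $X\geq 0$) and the bound is trivial, so we may restrict attention to $\alpha\in(0,1)$.

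The proof is entirely routine; the only delicate point is the optimization step, where one must verify that the chosen $t$ indeed minimizes the exponent (second-derivative check, or convexity of the cumulant generating function). The Taylor-series inequality is the only piece of genuine content beyond the MGF manipulation, and it admits the clean term-by-term proof indicated above.
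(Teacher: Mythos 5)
Your argument is correct and is the standard Chernoff--Cramér derivation. Note that the paper does not prove Theorem~\ref{t:Chernoff} itself --- it is quoted as a known fact with a reference to Alon--Spencer --- so there is no ``paper proof'' to compare against; your proof is exactly the textbook one that the cited source gives. The MGF bound $\Exp{e^{tX}}\leq\exp(np(e^t-1))$, the optimization at $t=\ln(1\pm\alpha)$, and the term-by-term Taylor inequality $(1-\alpha)\ln(1-\alpha)+\alpha=\sum_{k\geq2}\frac{\alpha^k}{k(k-1)}\geq\frac{\alpha^2}{2}$ are all handled correctly, and you rightly dispose of $\alpha\geq1$ in item~\ref{i:Chernoff2} by noting the left side is zero.

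Two small remarks, neither a gap. First, your justification of strictness (``Markov's inequality on a non-constant random variable with positive density near zero is strict'') is phrased loosely; the clean reason is that Markov's inequality $\Prob{Y\geq a}\leq\Exp{Y}/a$ is an equality only when $Y$ is supported on $\{0,a\}$, which $e^{tX}$ with $X\sim\Bin{n}{p}$ is not for $n\geq2$ and $p\in(0,1)$ (and the auxiliary inequality $1-p+pe^t<e^{p(e^t-1)}$ is also strict for $t\neq0$, $p\in(0,1)$). Second, the degenerate cases $p\in\{0,1\}$ or $n=0$ make the left-hand sides equal to zero while the right-hand sides stay positive, so strictness still holds there; worth a sentence if you want the claim airtight as stated.
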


It will sometimes be more convenient to use the following bound on the upper tail of the binomial distribution.
\begin{lem}\label{l:BeckChe}
If $X \sim\Bin{n}{p}$ and $k \gg np$, then $\Prob{X \geq k} \leq \binom{n}{k}p^k\leq (enp/k)^k$.
\end{lem}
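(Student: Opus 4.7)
The plan is to prove the two inequalities separately, each by a short, standard argument; the assumption $k \gg np$ is not actually required for the bounds themselves (the second is unconditional, and the first follows from a union bound), but it signals the regime in which the estimate is nontrivially useful.

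For the first inequality, I would use a union bound over $k$-subsets. Interpret $X$ as the number of successes in $n$ independent Bernoulli$(p)$ trials indexed by $[n]$. For each $S \in \binom{[n]}{k}$, let $A_S$ be the event that every trial indexed by $S$ is a success. If $X \geq k$, then there exists at least one set $S$ of $k$ successful trials, so $\{X \geq k\} = \bigcup_{|S|=k} A_S$. By the union bound,
\[
\Prob{X \geq k} \;\leq\; \sum_{|S|=k} \Prob{A_S} \;=\; \binom{n}{k} p^k,
\]
since $\Prob{A_S} = p^k$ by independence. This gives the first inequality, with no restriction on $k$ required.

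For the second inequality, I would invoke the standard estimate $\binom{n}{k} \leq (en/k)^k$. This follows from $\binom{n}{k} \leq n^k/k!$ combined with the Stirling-type lower bound $k! \geq (k/e)^k$ (which itself is immediate from $e^k = \sum_{j\geq 0} k^j/j! \geq k^k/k!$). Multiplying both sides by $p^k$ yields
\[
\binom{n}{k} p^k \;\leq\; \left(\frac{enp}{k}\right)^k,
\]
completing the proof.

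There is no real obstacle here; both steps are textbook. The only thing to flag is that the hypothesis $k \gg np$ is included because in that regime $enp/k = o(1)$, so the right-hand side decays super-polynomially in $k$, which is the setting in which this lemma is actually applied in the paper; outside that regime the bound is either vacuous or dominated by the sharper Chernoff estimate of Theorem \ref{t:Chernoff}.
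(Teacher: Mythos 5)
Your proof is correct. The paper states this lemma without proof, treating it as standard; your union-bound argument for $\Prob{X\geq k}\leq\binom{n}{k}p^k$ and the estimate $\binom{n}{k}\leq(en/k)^k$ via $k!\geq(k/e)^k$ are exactly the usual way to establish it, and your remark that the hypothesis $k\gg np$ is not logically needed but merely marks the regime where the bound beats Chernoff is accurate.
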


Lastly, we stress that throughout this paper we may omit floor and ceiling values when these are not crucial to avoid cumbersome exposition.

\section{From pseudorandomness to Hamiltonicity}\label{s:Hamiltonicity}
This section will provide all the necessary steps to show why a graph which possesses some random-like properties must be Hamiltonian. First, we show that a pseudorandom graph must contain an expander subgraph (Lemma \ref{l:GminHcontainsGamma}). We will also insist this expander subgraph is quite sparse in relation to the original graph, where the sparsity requirement on the expander subgraph will play a major role in the proof of Theorem \ref{t:HamResGnp}. As a second step we show how the expansion of small sets of vertices is a combinatorial property which is quite resourceful towards proving Hamiltonicity, where the main tool used to achieve this is the celebrated P\'{o}sa's rotation-extension technique (Lemma \ref{l:nEpsExpander}). 

\subsection{From pseudorandomness to expansion}
We start by defining a family of graphs with desired pseudorandom properties.
\renewcommand{\labelenumi}{\textbf{(P\arabic{enumi})}}
\begin{defn}\label{d:quasirand}
We say that a graph $G_1=(V_1,E_1)$ on $n_1$ vertices is $(n_1,d,\beta)$-\emph{quasi-random} if it satisfies the following properties:
\begin{enumerate}
\setcounter{enumi}{-1}
\item\label{i:quasidrand_mindeg} $\delta(G_1)\geq\frac{d}{150}$;
\item\label{i:quasidrand_smallsets} Every $U\subseteq V_1$ of cardinality $|U|<n_1^{0.11}\ln n_1$ satisfies $e_{G_1}(U)\leq d^{0.13}|U|$;
\item\label{i:quasidrand_medsets} Every $U\subseteq V_1$ of cardinality $|U|<12\beta n_1$ satisfies $e_{G_1}(U)\leq 50\beta d|U|$;
\item\label{i:quasidrand_bigsets} Every two disjoint subsets $U,Z\subseteq V_1$ where $|U|=\beta n_1$ and $|Z|=\left(\frac{1}{3}-27\beta\right)n_1$ satisfy $e_{G_1}(U,Z)\geq n_1\ln\ln n_1$.
\end{enumerate}
\end{defn}
The goal will be to find subgraphs of $(n_1,d,\beta)$-quasi-random graphs which satisfy certain expansion properties. To define this family of subgraphs we introduce the following ``expansion'' function. Given an integer $n_1$, we let $f_\beta:[\beta n_1]\rightarrow\mathbb{R}^{+}$ denote the function defined by: 
\renewcommand{\labelenumi}{\textbf{(Q\arabic{enumi})}}
\begin{enumerate}
\item\label{i:fbeta-expander_small} $f_\beta(t)=(\ln n_1)^{0.8}$ for every integer $1\leq t<n_1^{0.1}$;
\item\label{i:fbeta-expander_med} $f_\beta(t)=11$ for every integer $n_1^{0.1}\leq t< \beta n_1$;
\item\label{i:fbeta-expander_large} $f_\beta(\beta n_1)=\frac{2(1+39\beta)}{3\beta}$.
\end{enumerate}
The following lemma guarantees that every $(n_1,d,\beta)$-quasi-random graph contains a sparse $(\beta n_1,f_\beta)$-expander subgraph.
\begin{lem}\label{l:GminHcontainsGamma}
For every constant $0<\beta<\frac{1}{15\cdot10^4}$ there exists an integer $n_0=n_0(\beta)>0$ such that if $G_1$ is an $(n_1,d,\beta)$-quasi-random graph for some $n_1\geq n_0$ and $d\geq \ln n_1$ then $G_1$ contains a $(\beta n_1,f_\beta)$-expander subgraph $\Gamma$ satisfying $e(\Gamma)\leq 10^6\beta e(G_1)$.
\end{lem}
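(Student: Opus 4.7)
The plan is to construct $\Gamma$ as a random sparsification of $G_1$: for each edge of $G_1$, independently include it in $\Gamma$ with some probability $q = \Theta(\beta)$ (say, $q = 10^5 \beta$). Chernoff applied to $e(\Gamma) \sim \mathrm{Bin}(e(G_1), q)$ then immediately gives $e(\Gamma) \leq 10^6 \beta e(G_1)$ with probability $1 - o(1)$, so the edge-count half of the conclusion will come for free.

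For the expansion I would use a uniform ``witness set'' union bound in each of the three regimes of $|U|$. Observe that $|N_\Gamma(U)| < f_\beta(|U|)|U|$ is equivalent to the existence of $T \subseteq V_1 \setminus U$ of size $\lceil f_\beta(|U|)|U| \rceil$ containing $N_\Gamma(U) \setminus U$; equivalently, setting $W := V_1 \setminus (U \cup T)$, that $\Gamma$ has no edges between $U$ and $W$. Conditional on $(U,T)$, this probability is $(1-q)^{e_{G_1}(U,W)} \leq \exp(-q \cdot e_{G_1}(U,W))$, and the proof then lower-bounds $e_{G_1}(U,W)$ using (P0)--(P3) in a manner tailored to the size $|U|$, and unions over $(U,T)$.

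In the \emph{large set} regime $|U| = \beta n_1$, the expansion factor $f_\beta(\beta n_1) = \frac{2(1+39\beta)}{3\beta}$ is calibrated precisely so that a failure forces $|W| \geq (\frac{1}{3} - 27\beta)n_1$; property (P3) provides $e_{G_1}(U,W) \geq n_1 \ln \ln n_1$, and $\exp(-q n_1 \ln \ln n_1) = n_1^{-\omega(1)}$ easily beats the union bound $\binom{n_1}{\beta n_1}\binom{n_1}{|T|} \leq 4^{n_1}$. In the \emph{medium set} regime $n_1^{0.1} \leq |U| < \beta n_1$, both $U$ and $U \cup T$ (of size $12|U|$) lie in the range of (P2), so (P0) and (P2) combine to yield $e_{G_1}(U,W) \geq (\frac{1}{150} - 700\beta)|U|d \geq |U|d/500$, where the positivity of the parenthesis uses the hypothesis $\beta < 1/(15\cdot 10^4)$. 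In the \emph{small set} regime $|U| < n_1^{0.1}$, (P0) and (P1) give the analogous bound $e_{G_1}(U,W) \geq |U|d(1-o(1))/150$, provided $n_1$ is large enough that the error term $d^{0.13}(\ln n_1)^{0.8}|U|$ is swamped by $|U|d/150$.

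The main obstacle will be the small-set regime. Its expansion factor $(\ln n_1)^{0.8}$ is itself mildly unbounded, so the union bound over witness sets $T$ of size $(\ln n_1)^{0.8}|U|$ produces a factor of roughly $n_1^{(\ln n_1)^{0.8}|U|}$ that has to be dominated by the exponential decay $\exp(-q|U|d/300)$. This forces $q \cdot d$ to comfortably exceed $(\ln n_1)^{1.8}$, which — because the hypothesis only guarantees $d \geq \ln n_1$ — requires $n_0 = n_0(\beta)$ to be taken so large that the constant-in-$\beta$ choice of $q$ overcomes the slowly-growing expansion demand. Balancing these inequalities so that all three union bounds converge simultaneously, while keeping $q \leq 10^6 \beta$, is the delicate tuning between sparsity, expansion, and the minimum degree $\delta(G_1) \geq d/150$ that sits at the heart of the lemma.
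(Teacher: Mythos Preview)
Your union-bound approach breaks down in the small and medium regimes, and the breakdown is not repairable by choosing $n_0$ large. Take $d=\ln n_1$ (the extremal case allowed by the hypothesis) and $q=\Theta(\beta)$. In the medium regime $n_1^{0.1}\le |U|=u<\beta n_1$, your own estimate gives $e_{G_1}(U,W)\ge u d/500$, so the failure probability for a fixed pair $(U,T)$ is at most $\exp(-q u d/500)=\exp(-\Theta(\beta u\ln n_1))$. But the number of pairs $(U,T)$ with $|T|=11u$ is at least $\binom{n_1}{11u}\ge\exp(\Theta(u\ln n_1))$ when $u=n_1^{0.1}$, and since $\beta<1/(15\cdot 10^4)$ the exponent $\Theta(\beta u\ln n_1)$ is smaller by a factor of order $1/\beta$. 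The same phenomenon occurs, only worse, in the small-set regime: there the number of witness sets is $\exp(\Theta(u(\ln n_1)^{1.8}))$ while the failure probability is only $\exp(-\Theta(\beta u\ln n_1))$. Increasing $n_1$ makes the ratio $(\ln n_1)^{0.8}/\beta$ (or $1/\beta$) \emph{worse}, not better, so ``taking $n_0$ large'' cannot close the gap. The sentence ``Balancing these inequalities so that all three union bounds converge simultaneously'' is precisely the step that cannot be carried out.

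The paper avoids this difficulty by not union-bounding over small and medium sets at all. Instead it first secures a minimum-degree event in $\Gamma$: since each event $\{d_\Gamma(v)\ge\lfloor q\,\delta(G_1)\rfloor\}$ is increasing in the edge indicators and has probability at least $1/2$, the FKG inequality gives $\Pr[\delta(\Gamma)\ge q d/200]\ge 2^{-n_1}$. The point is that on this event the expansion properties \textbf{Q1} and \textbf{Q2} follow \emph{deterministically}: the upper bounds \textbf{P1} and \textbf{P2} on $e_{G_1}(W)$ for $W=U\cup N_\Gamma(U)$ are inherited by $\Gamma\subseteq G_1$, and if $|N_\Gamma(U)|$ were too small these bounds would contradict $e_\Gamma(W)\ge \delta(\Gamma)|U|-e_\Gamma(U)$. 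Thus no union bound over $(U,T)$ is needed for small or medium sets. Only \textbf{Q3} and the edge count are handled probabilistically, and both hold with probability $1-o(2^{-n_1})$, so the intersection with the minimum-degree event still has positive probability. Replacing your witness-set union bound in the first two regimes by this ``positive-probability minimum degree $\Rightarrow$ deterministic expansion'' argument is the missing idea.
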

\begin{proof}
Pick every edge of $G_1=(V_1,E_1)$ to be an edge of $\Gamma$ with probability $\gamma=15\cdot10^4\beta$ independently of all other choices. Our goal is to prove that $\Gamma$ is an $(\beta n_1,f_\beta)$-expander with positive probability. 

First, we analyze the minimum degree of $\Gamma$. The degree of every vertex $v \in V_1$ in $\Gamma$ is binomially distributed, $d_{\Gamma}(v)\sim\Bin{d_{G_1}(v)}{\gamma}$, with median at least $\lfloor\gamma\delta(G_1)\rfloor$. Therefore $\Prob{d_{\Gamma}(v)\geq\lfloor\gamma\delta(G_1)\rfloor} \geq 1/2$. We choose $n_1$ to be large enough so that $\lfloor\gamma\delta(G_1)\rfloor\geq \frac{\gamma d}{200}$, then by property \textbf{P\ref{i:quasidrand_mindeg}} and the fact that the degrees in $\Gamma$ of every two vertices are positively correlated, we have that
$$\Prob{\delta(\Gamma)\geq\frac{\gamma d}{200}}\geq\Prob{\forall v\in V_1.\; d_\Gamma(v)\geq\lfloor\gamma\delta(G_1)\rfloor}\geq\prod_{v\in V_1}\Prob{d_\Gamma(v)\geq\lfloor\gamma\delta(G_1)\rfloor} \geq 2^{-n_1},$$
using the FKG inequality (see e.g. \cite[Chapter 6]{AloSpe2008}).

Under the assumption that $\delta(\Gamma)\geq\frac{\gamma d}{200}$ we show that $\Gamma$ must satisfy \textbf{Q\ref{i:fbeta-expander_small}}, namely that every $U\subseteq V_1$ of cardinality $|U|<n_1^{0.1}$ satisfies $|N_{\Gamma}(U)|\geq |U|(\ln n_1)^{0.8}$. Let $U\subseteq V_1$ be some subset of cardinality $|U|<n_1^{0.1}$, and assume that $|N_\Gamma(U)|<|U|(\ln n_1)^{0.8}$. Denote by $W=U\cup N_\Gamma(U)$, then by our assumption $|W|<|U|(1+(\ln n_1)^{0.8})\leq|U|(\ln n_1)^{0.81}\ll n_1^{0.11}\ln n_1$. Property \textbf{P\ref{i:quasidrand_smallsets}} of $G_1$ implies both $e_{\Gamma}(U)\leq e_{G_1}(U)\leq d^{0.13}|U|$ and $e_{\Gamma}(W)\leq e_{G_1}(W)\leq d^{0.13}|W|<|U|d^{0.13}(\ln n_1)^{0.81}$. On the other hand, $e_\Gamma(W)\geq\delta(\Gamma)\cdot|U|-e_{\Gamma}(U)\geq\frac{\gamma d}{200}\cdot|U|-e_{G_1}(U)>|U|d^{0.13}(\frac{\gamma(\ln n_1)^{0.87}}{200}-1)>|U|d^{0.13}(\ln n_1)^{0.81}$ which is a contradiction.

Property \textbf{Q\ref{i:fbeta-expander_med}} of $\Gamma$ will follow the exact same lines, again under the assumption that $\delta(\Gamma)\geq\frac{\gamma d}{200}$. Let $U\subseteq V_1$ be some subset of cardinality $n_1^{0.1}\leq|U|<\beta n_1$, and assume that $|N_\Gamma(U)|<11|U|$. Denote by $W=U\cup N_\Gamma(U)$, then by our assumption $|W|<12|U|<12\beta n_1$. Property \textbf{P\ref{i:quasidrand_medsets}} of $G_1$ implies both $e_{\Gamma}(U)\leq e_{G_1}(U)\leq 50\beta d|U|$ and $e_{\Gamma}(W)\leq e_{G_1}(W)\leq 50\beta d|W|<600\beta d|U|$. On the other hand, $e_\Gamma(W)\geq\delta(\Gamma)\cdot|U|-e_{\Gamma}(U)\geq\frac{\gamma d}{200}\cdot|U|-e_{G_1}(U)\geq750\beta d|U|-50\beta d|U|>600\beta d|U|$ which is a contradiction.

We proceed to show that $\Gamma$ satisfies the bound on its number of edges and property \textbf{Q\ref{i:fbeta-expander_large}} with probability greater than $1-2^{-n_1}$. This will imply that there exists a subgraph $\Gamma$ as stated by the lemma with positive probability. Consider the number of edges in $\Gamma$. Clearly, $e(\Gamma)\sim\Bin{e(G_1)}{\gamma}$ and by property \textbf{P\ref{i:quasidrand_mindeg}} we have $e(G_1)\geq\frac{n_1\delta(G_1)}{2}\geq\frac{n_1d}{300}\geq\frac{n_1\ln n_1}{300}$. Using Lemma \ref{l:BeckChe} it follows that
$$\Prob{e(\Gamma)>10^6\beta e(G_1)}\leq\left(\frac{e\cdot e(G_1)\cdot\gamma}{10^6\beta e(G_1)}\right)^{10^6\beta e(G_1)}=o(2^{-n_1}).$$

Fix a subset $U\subseteq V$ of cardinality $|U|=\beta n_1$. Assume that for every subset of vertices $Z\subseteq V_1$ of cardinality $|Z|=\left(\frac{1}{3}-27\beta\right)n_1$ and disjoint from $U$, we have that $e_\Gamma(U,Z)>0$, then $|N_\Gamma(U)|\geq\left(\frac{2}{3}+26\beta\right)n_1=\frac{2(1+39\beta)}{3\beta}|U|$. Therefore, in order to prove that $\Gamma$ satisfies \textbf{Q\ref{i:fbeta-expander_large}} with the required probability, we prove that $e_{\Gamma}(U,Z)>0$ for every pair of disjoint subsets of vertices $U,Z\subseteq V_1$ of cardinality $|U|=\beta n_1$, and $|Z|=\left(\frac{1}{3}-27\beta\right)n_1$ with probability at least $1-2^{-n_1}$. By property \textbf{P\ref{i:quasidrand_bigsets}} it follows that $e_{G_1}(U,Z)>n_1\ln\ln n_1$, and therefore $e_{\Gamma}(U,Z)$ is stochastically dominated by $\Bin{n_1\ln\ln n_1}{\gamma}$. This implies that
$$\Prob{e_{\Gamma}(U,Z)=0}<(1-\gamma)^{n_1\ln\ln n_1}<\exp(-\gamma n_1\ln\ln n_1).$$
Now, by going over all possible pairs of subsets $U$ and $Z$, as there are at most $4^{n_1}$ of those, we have that the probability that $\Gamma$ does not satisfy property \textbf{Q\ref{i:fbeta-expander_large}} is at most $\exp(-\gamma n_1\ln\ln n_1)\cdot4^{n_1}=o(2^{-n_1})$ which completes the proof.
\end{proof} 

\subsection{From expansion to Hamiltonicity: P\'{o}sa's rotation-extension technique}\label{ss:Posa}
In order to describe the relevant connection between Hamiltonicity and expanders, we require the notion of \emph{boosters}.
\begin{defn}\label{d:booster}
For every graph $G$ we say that a non-edge $\{u,v\}\notin E(G)$ is a \emph{booster} with respect to $G$ if $G+\{u,v\}$ is Hamiltonian or $\ell(G+\{u,v\})>\ell(G)$. For any vertex $v\in V$ we denote by
\begin{equation}
B_G(v)=\{w\notin N_G(v)\cup\{v\}\;:\;\{v,w\}\hbox{ is a booster}\}.
\end{equation}
\end{defn}
The following simple lemma describes a sufficient condition for a graph $G$ to be such that deleting the edges of some subgraph $H\subseteq G$ leaves a Hamiltonian graph. Note that a crucial point in the lemma is the existence of another graph which acts as a ``backbone'' by providing enough boosters.
\begin{lem}\label{l:boosters2Hamiltonian}
Let $G=(V,E)$ be a graph and let $H\subseteq G$ be some subgraph of it. If there exists a subgraph $\Gamma\subseteq G-H$ such that for every $E'\subseteq E(G-H-\Gamma)$ of cardinality $|E'|\leq n$ there exists a vertex $v\in V$ (which may depend on $E'$) satisfying $|N_{G-(\Gamma+E')}(v)\cap B_{\Gamma+E'}(v)|>d_H(v)$, then $G-H$ is Hamiltonian.
\end{lem}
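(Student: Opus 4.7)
The plan is to build a Hamiltonian subgraph of $G-H$ by starting from $\Gamma$ and greedily adding one booster edge at a time, using the hypothesis at each stage to guarantee the existence of such a booster that moreover avoids $H$ entirely.

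Concretely, I set $E_0 := \emptyset$ and $\Gamma_i := \Gamma + E_i$, so $\Gamma_0 = \Gamma \subseteq G-H$. Suppose inductively that $E_{i-1} \subseteq E(G-H-\Gamma)$ with $|E_{i-1}| = i-1 \leq n-1$ has been constructed and that $\Gamma_{i-1}$ is not Hamiltonian. Applying the hypothesis with $E' = E_{i-1}$ produces a vertex $v$ with $|N_{G-\Gamma_{i-1}}(v) \cap B_{\Gamma_{i-1}}(v)| > d_H(v)$. The critical piece of bookkeeping is that $\Gamma_{i-1}$ is edge-disjoint from $H$, since both $\Gamma$ and $E_{i-1}$ lie in $G-H$ by construction; hence every $H$-neighbor of $v$ still survives in $N_{G-\Gamma_{i-1}}(v)$, and therefore $|N_{G-\Gamma_{i-1}}(v) \cap N_H(v)| \leq d_H(v)$. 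Combining the two inequalities yields a booster $w \in N_{G-\Gamma_{i-1}}(v) \cap B_{\Gamma_{i-1}}(v)$ with $w \notin N_H(v)$, so the edge $\{v,w\}$ lies in $E(G-H-\Gamma-E_{i-1})$. I define $E_i := E_{i-1} \cup \{\{v,w\}\}$, and by the definition of booster, either $\Gamma_i$ is Hamiltonian (in which case the process halts) or $\ell(\Gamma_i) \geq \ell(\Gamma_{i-1}) + 1$.

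Since $\ell(\cdot)$ is bounded above by $n-1$ on a graph with $n$ vertices and strictly increases at each non-terminal step, the construction must terminate after some $m \leq n$ iterations with $\Gamma_m \subseteq G-H$ Hamiltonian. Throughout the process $|E_i| \leq n$, so the hypothesis remains applicable at every step, and the existence of $\Gamma_m$ finishes the proof. There is no substantive obstacle here beyond the bookkeeping: the lemma is essentially the observation that the ``budget'' $d_H(v)$ in the assumption is calibrated precisely to absorb the at most $d_H(v)$ booster candidates at $v$ that could be lost to edges of $H$.
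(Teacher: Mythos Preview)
Your proof is correct and follows essentially the same greedy booster-addition argument as the paper: start from $\Gamma$, repeatedly use the hypothesis to locate a booster edge that also avoids $H$, and terminate within $n$ steps since the longest path strictly grows. Your explicit justification that at most $d_H(v)$ of the candidate boosters at $v$ can be $H$-edges (via $N_H(v)\subseteq N_{G-\Gamma_{i-1}}(v)$) makes the pigeonhole step a touch more transparent than the paper's one-line version, but the substance is the same.
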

\begin{proof}
Assume $\Gamma\subseteq G-H$ is as required by the lemma, then we prove there exists an edge set $F\subseteq E(G-H-\Gamma)$ such that the graph $\Gamma+F$ must be Hamiltonian, which in turn implies $G-H$ is Hamiltonian. Start with $F_0=\emptyset$. Assume that $F_i$ is a subset of $0\leq i\leq n$ edges of $E(G-H-\Gamma)$. If the graph $\Gamma_i=\Gamma+F_i\subseteq G-H$ is Hamiltonian we are done. Otherwise, by the assumption of the lemma, there exists a vertex $v_i\in V$ such that $|N_{G-\Gamma_i}(v_i)\cap B_{\Gamma_i}(v_i)|>d_H(v_i)$, and hence there exists at least one neighbor of $v_i$ in $G$, which we denote by $w_i$, such that the pair $\{v_i,w_i\}$ is still an edge in $G-H$, and is a booster with respect to $\Gamma_i$ (hence not an edge of $\Gamma_i$). It follows that the graph $\Gamma_i+\{v_i,w_i\}$ is Hamiltonian or $\ell(\Gamma_i+\{v_i,w_i\})>\ell(\Gamma_i)$. Finally, set $F_{i+1}=F_i\cup\{\{v_i,w_i\}\}$. Note that there must exist an integer $i_0\leq n$ such that $\Gamma_{i_0}$ is Hamiltonian, as the length of a path cannot exceed $n-1$.
\end{proof}

We now describe and apply a crucial technical tool, originally developed by P\'{o}sa \cite{Pos76}, which lies in the foundation of many Hamiltonicity results of random and pseudo-random graphs. This technique, which has come to be known as P\'{o}sa's rotation-extension, relies on the following basic operation on a longest path in a graph. We use the following two definitions.
\begin{defn}
Let $G=(V,E)$ be a graph, and let $P=(v_0,v_1,\ldots,v_\ell)$ be a longest path in $G$. If $\{v_i,v_\ell\}\in E$ for some $0\leq i\leq \ell-2$, then an \emph{elementary rotation} of $P$ along $\{v_i,v_\ell\}$ is the construction of a new longest path $P'=P-\{v_i,v_{i+1}\}+\{v_i,v_\ell\}=(v_0,v_1,\ldots,v_i,v_\ell,v_{\ell-1},\ldots,v_{i+1})$. We say that the edge $\{v_i,v_{i+1}\}$ is \emph{broken} by this rotation.
\end{defn}

Given a fixed $\beta>0$ and integer $n$, we define a family $\mL=\mL(n,\beta)$ of graphs with vertex set $V=[n]$ to which these elementary rotations will be applied to. The graphs in $\mL$ all entail some pseudo-random properties. Now, a graph $G=(V,E)\in\mL(n,\beta)$ if its vertex set can be partitioned $V=V_1\cup D$ such that:
\renewcommand{\labelenumi}{\textbf{(L\arabic{enumi})}}
\begin{enumerate}
\item\label{i:L-family-small-size} $|D|\leq n^{0.09}$;
\item\label{i:L-family-small-deg} $d(u,V_1)\geq 2$ for every $u\in D$;
\item\label{i:L-family-small-nopath} There is no path in $G$ of length at most $\frac{2\ln n}{3\ln\ln n}$ with both (possibly identical) endpoints in $D$;
\item\label{i:L-family-big} $G_1=G[V_1]$ is a $(\beta n_1,f_\beta)$-expander on $n_1=|V_1|$ vertices.
\end{enumerate}
Using elementary rotations we proceed to show that any $G\in\mL(n, \beta)$ must be Hamiltonian or that the subset of vertices with ``large'' $B_G(v)$ must also be large. Our proof uses similar ideas to those found in \cite{BenKriSudPre}.
\begin{lem}\label{l:nEpsExpander}
For every fixed $\beta>0$ there exists an integer $n_0(\beta)=n_0>0$ such that if $n\geq n_0$ then every graph $G=(V,E)\in\mL(n,\beta)$ is Hamiltonian or must satisfy $|\{v\in V_1\;:\; |B_G(v)|\geq n/3+\beta n\}|\geq n/3+\beta n$.
\end{lem}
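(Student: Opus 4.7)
My plan is a proof by contradiction via P\'osa's rotation-extension technique applied twice. Assume $G \in \mL(n,\beta)$ is not Hamiltonian, and fix a longest path $P=(v_0,v_1,\ldots,v_\ell)$ in $G$. The first step is a key sub-claim: for any longest path $Q$ in $G$ with fixed endpoint $u$, the set $T(Q,u)\subseteq V$ of endpoints of longest paths obtained from $Q$ by iterated elementary rotations fixing $u$ satisfies $|T(Q,u)\cap V_1|\ge n/3+\beta n$. To prove this I would use P\'osa's rotation lemma, which gives $|N_G(T)\setminus T|\le 2|T|$, combined with the expansion of $G_1$: properties \textbf{Q1} (expansion factor $(\ln n_1)^{0.8}$) and \textbf{Q2} (factor $11$) rule out the ranges $1\le|T\cap V_1|<n_1^{0.1}$ and $n_1^{0.1}\le|T\cap V_1|<\beta n_1$ by direct clash with the P\'osa bound, after absorbing an $O(n^{0.09})$ error coming from $D$. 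In the remaining regime $|T\cap V_1|\ge\beta n_1$, property \textbf{Q3} applied to a subset of size $\beta n_1$ yields $|T\cup N_{G_1}(T)|\ge(2/3+27\beta)n_1$, and a sharpened P\'osa-style inequality then gives the desired bound. Properties \textbf{L2} and \textbf{L3} are used to handle the set $D$ of exceptional vertices: \textbf{L2} guarantees two $V_1$-neighbors at each $D$-vertex (so a single rotation moves any $D$-endpoint into $V_1$), and \textbf{L3} ensures that $D$-vertices are well-separated on paths, so that $|T\cap D|\le|D|\le n^{0.09}$.

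Given the sub-claim, the lemma follows by applying it twice. Applying it to $P$ with $v_0$ fixed produces a set $A:=T(P,v_0)\cap V_1$ with $|A|\ge n/3+\beta n$. For each $u\in A$, let $P_u$ be the corresponding longest path from $v_0$ to $u$; applying the sub-claim to $P_u$ with $u$ fixed gives $B_u:=T(P_u,u)\cap V_1$ with $|B_u|\ge n/3+\beta n$. For any $w\in B_u$ with $\{u,w\}\notin E(G)$, adding $\{u,w\}$ to $G$ closes a cycle of length $\ell+1$: either this cycle is Hamiltonian, contradicting the assumption, or, using that $G$ is connected (which follows from the expansion of $G_1$ together with \textbf{L2}), some vertex outside the cycle has a $G$-neighbor on it, yielding a path of length $\ell+1$ that contradicts the maximality of $P$. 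Hence $\{u,w\}$ is a booster of $G$, and $|B_G(u)|\ge|B_u|-d_G(u)-1$. Since only an $o(n)$ fraction of vertices of $A$ can have $G$-degree close to $n$ (such vertices would saturate the expansion budget of $G_1$), the set $\{v\in V_1:|B_G(v)|\ge n/3+\beta n\}$ contains all but a negligible portion of $A$, and so has size at least $n/3+\beta n$.

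The main obstacle lies in the last part of the sub-claim: a direct combination of the factor-$2$ P\'osa bound with the $(2/3+27\beta)$-coverage guaranteed by \textbf{Q3} only yields $|T\cap V_1|\ge(2/9+9\beta)n$, which falls short of the required $n/3+\beta n$. Closing this gap requires a finer P\'osa-style argument that exploits the \emph{many} edges (of order $n_1\ln\ln n_1$, not merely one) between sets of sizes $\beta n_1$ and $(1/3-27\beta)n_1$ guaranteed by \textbf{Q3}, together with a careful treatment of the $D$-vertices --- along the lines of the approach in the authors' earlier paper \cite{BenKriSudPre}.
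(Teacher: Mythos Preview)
Your overall architecture---do P\'osa rotations twice, once with $v_0$ fixed to get a large set $A$ of endpoints, then again from each $u\in A$---matches the paper. The real issue is precisely the gap you flag, and your proposed fix does not work.

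You suggest closing the $2n/9$ versus $n/3$ gap by exploiting ``the many edges (of order $n_1\ln\ln n_1$) \ldots\ guaranteed by \textbf{Q3}''. But \textbf{Q3} is a property of the expansion function $f_\beta$, i.e.\ of $(\beta n_1,f_\beta)$-expanders; it says only that $|N_{G_1}(U)|\ge \frac{2(1+39\beta)}{3}n_1$ for $|U|=\beta n_1$. The $n_1\ln\ln n_1$ edge count is property \textbf{P3} of $(n_1,d,\beta)$-quasi-random graphs, which is \emph{not} part of the hypotheses of this lemma (membership in $\mL(n,\beta)$ only gives \textbf{L1}--\textbf{L4}). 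So the extra information you want to invoke is simply unavailable here.

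What the paper does instead is abandon the global P\'osa bound $|N_G(T)\setminus T|\le 2|T|$ altogether and grow the endpoint set in rounds. One maintains sets $S_0\subseteq S_1'\supseteq S_1\subseteq\cdots$ with the key recursive estimate
\[
|S'_{i+1}|\ \ge\ \tfrac12\Bigl(|N_G(S_i)|-3\sum_{j\le i}|S_j|\Bigr),
\]
obtained because each ``fresh'' neighbour $v_k$ of $S_i$ produces a new endpoint among $\{v_{k-1},v_{k+1}\}$, and this map is at most $2$-to-$1$. The sets are grown first by factor $T=(\ln n_1)^{0.7}$ using \textbf{Q1}, then doubled using \textbf{Q2}, until $|S_r|=\beta n_1$; all earlier $S_j$ together contribute only $O(\beta n_1)$. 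In the final step \textbf{Q3} gives $|N_G(S_r)|\ge \frac{2(1+39\beta)}{3}n_1$, and the factor $\tfrac12$ (not $\tfrac13$) in the displayed inequality yields $|S'_{r+1}|\ge n/3+\beta n$. The point is that by rotating from a \emph{small} current set $S_r$, the loss in passing from neighbours to new endpoints is a halving, whereas the global P\'osa inequality applied to the full endpoint set $T$ costs a factor of three.

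One minor remark: your degree subtraction $|B_G(u)|\ge |B_u|-d_G(u)-1$ and the ensuing argument about high-degree vertices are unnecessary. If some $w\in B_u$ satisfies $\{u,w\}\in E(G)$, then $G$ already contains a cycle on all vertices of $P$, and connectivity forces $G$ to be Hamiltonian; so under the standing assumption that $G$ is not Hamiltonian, every $w\in S'_{r+1}$ is automatically a non-neighbour of $u$ and hence a booster.
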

\begin{rem}
We remark that property \textbf{L\ref{i:L-family-big}} implicitly implies an upper bound on $\beta$ for $\mL(n,\beta)$ to be non-empty. Indeed $\frac{2(1+39\beta)}{3}\leq 1-\beta$ must hold which implies $\beta\leq\frac{1}{81}$.
\end{rem}
\begin{proof}
First, we claim that under the assumptions on $G$, it must be connected. In order to prove this, we begin by showing that $G_1$ is connected. Indeed, assume otherwise and let $W\subseteq V_1$ be a connected component of cardinality $|W|\leq n_1/2$. Properties \textbf{Q\ref{i:fbeta-expander_small}} and \textbf{Q\ref{i:fbeta-expander_large}} imply that every subset of at most $\beta n_1$ vertices has a non-empty neighbor set, hence we can further assume that $|W|>\beta n_1$. Let $W'\subseteq W$ be of cardinality $\beta n_1$, then $|N_G(W')|>2n_1/3>n_1/2$ by property \textbf{Q\ref{i:fbeta-expander_large}} and hence cannot be contained in $W$, a contradiction. Property \textbf{L\ref{i:L-family-small-deg}} guarantees that every vertex in $D$ is connected to some vertex in $V_1$, so adding the vertices of $D$ to the graph $G_1$ leaves the graph connected. 

Take a longest path $P=(v_0,\ldots,v_{\ell})$ in $G$. By the assumption on $G$ we can clearly assume that $\ell\geq 3$. Since $P$ is a longest path, $N_G(v_0)\cup N_G(v_\ell)\subseteq P$. We first claim that we can choose such a path $P$ with both of its endpoints in $V_1$. So, assume that at least one of the endpoints of $P$, $v_0$ and $v_\ell$, is in $D$. If $v_0$ and $v_\ell$ are neighbors then $G$ must contain a cycle of length $\ell(G)$. This implies that $G$ is Hamiltonian, as otherwise $\ell(G)<n$ and since $G$ is connected there is an edge emitting out of this cycle creating a path of length $\ell(G)+1$ in $G$ which is a contradiction. We can thus assume $v_\ell$ and $v_0$ are not be neighbors. Assume w.l.o.g. $v_\ell\in D$ then by property \textbf{L\ref{i:L-family-small-deg}} the vertex $v_\ell$ must have a neighbor other than $v_{\ell-1}$. As all the neighbors of $v_\ell$ must lie on $P$ we denote this neighbor by $v_i$ where $1\leq i\leq \ell-2$. Performing an elementary rotation of $P$ along $\{v_i,v_\ell\}$ results in the path $P'=(v_0,v_1,\ldots,v_i,v_\ell,v_{\ell-1}\ldots,v_{i+1})$ with $v_{i+1}$ as an endpoint. By Property \textbf{L\ref{i:L-family-small-nopath}} of $D$, the vertex $v_{i+1}\notin D$ since it is at distance at most $2$ from $v_\ell\in D$. Denote the resulting longest path which terminates at $v_{i+1}$ by $P'$. If $v_0\in D$ as well then we can also perform a rotation on $P'$ keeping $v_{i+1}$ fixed. Note that in order to do so we can assume $v_0$ and $v_{i+1}$ are non-neighbors, as otherwise our graph in Hamiltonian as previously claimed. We can thus assume that our initial longest path $P$ has indeed two endpoints in $V_1$.

We set with foresight $r_0=\frac{\ln n_1}{7\ln\ln n_1}$, and $r$ to be the minimal integer for which $\beta n_1\leq 2^{r-r_0}\cdot T^{r_0}< 2\beta n_1$, where $T=\ln^{0.7}n_1$. For every $0\leq i\leq r$ we will find a set $S_i\subseteq V_1$ which is composed of (not necessarily all) endpoints of longest paths in $G$ obtained by performing a series of $i$ elementary rotations starting from $P$ while keeping the endpoint $v_0$ fixed such that for every $j<i$ in the $j$-th rotation the non-$v_0$ endpoint lies in $S_j\subseteq V_1$. We construct this sequence of sets $\{S_i\}_{i=0}^r$ as follows. We start with $S_0=\{v_\ell\}$ (which, by our assumption, is not in $D$). For every $i>0$, let $U_i=\{v_k\in N_{G}(S_i)\;:\;v_{k-1},v_k,v_{k+1}\notin\bigcup_{j=0}^i S_j\}$. Let $v_k\in U_i$, and $x\in S_i$ be such that $\{x,v_k\}\in E$, and denote by $Q$ the longest path from $v_0$ to $x$ obtained from $P$ by $i$ elementary rotations leaving $v_0$ fixed. By the definition of $U_i$, none of the vertices $v_{k-1},v_k,v_{k+1}$ is an endpoint of one of the sequence of longest paths starting from $P$ and yielding $Q$, hence both edges $\{v_{k-1},v_k\}$ and $\{v_k,v_{k+1}\}$ were not broken and are therefore present in $Q$. Rotating $Q$ along the edge $\{x,v_k\}$ will make one of the vertices $\{v_{k-1},v_{k+1}\}$ an endpoint in the resulting path. Assume w.l.o.g. that it is $v_{k-1}$, and hence add it to the set $S'_{i+1}$. Note that the vertex $v_{k-1}$ can also be added to the set $S'_{i+1}$ if the vertex $v_{k-2}$ in $U_i$, therefore
\begin{equation}\label{e:S_i}
|S'_{i+1}|\geq \frac{1}{2}|U_i|\geq \frac{1}{2}\left(|N_G(S_{i})|-3\sum_{j=0}^i|S_j|\right).
\end{equation}
Having defined the set $S'_{i+1}$ for every $i$, we proceed to demonstrate how the set $S_{i+1}$ is chosen as an appropriate subset of $S'_{i+1}$. This is done according to the value of $i$. First, for $0\leq i<r_0$ the set $S_i$ is chosen to be of cardinality $T^i$, where we prove we can do so inductively. Clearly, our assumption on $S_0$ proves that the base of the induction holds. Assuming we can choose the subsets $S_j\subseteq S'_{j}$ for all $j\leq i<r_0-1$, we prove we can choose a subset $S_{i+1}\subseteq S'_{i+1}\cap V_1$. We have by our induction hypothesis that $|S_i|=T^i<T^{r_0}\leq n_1^{0.1}$, hence for every $i<r_0$ property \textbf{Q\ref{i:fbeta-expander_small}} implies $|N_{G}(S_i)|\geq T^{i+1.1}$, and therefore by \eqref{e:S_i}
$$|S'_{i+1}|\geq\frac{1}{2}\left(T^{i+1.1}-3\left(\frac{T^{i+1}-1}{T-1}\right)\right)>T^{i+1}+1.$$ 
Every vertex in $S'_{i+1}$ is at distance at most $2i$ from $v_\ell$, therefore, every two vertices in $S'_{i+1}$ are at distance at most $4i<4r_0\leq\frac{4\ln n_1}{7\ln\ln n}<\frac{2\ln n}{3\ln\ln n}$ from each other. Property \textbf{L\ref{i:L-family-small-deg}} implies there can be at most one vertex from $D$ in this set. Removing this vertex if it exists, we can set $S_{i+1}$ to be any subset of $S'_{i+1}$ of $T^{i+1}$ vertices from $V_1$. In particular $|S_{r_0}|=T^{r_0}\gg n^{0.09}\geq |D|$. Next, for $r_0\leq i<r$ we will construct the sets $S_i$ to be of cardinality $R\cdot 2^{i-r_0}\gg |D|$, where $R=T^{r_0}$. Properties \textbf{Q\ref{i:fbeta-expander_small}} and \textbf{Q\ref{i:fbeta-expander_med}} imply $|N_{G}(S_i)|\geq 11\cdot|S_i|\geq 10|S_i|+2|D|$, and therefore by \eqref{e:S_i}
\begin{eqnarray*}
|S'_{i+1}|&\geq& \frac{1}{2}\left(10\cdot R\cdot2^{i-r_0}+2|D|-3\left(\sum_{j=0}^{r_0-1}T^i+\sum_{j=r_0}^{i}R\cdot 2^{j-r_0}\right)\right)\\
&\geq&|D|+5R2^{i-r_0}-\frac{3}{2}\left(T^{r_0}+R(2^{i+1-r_0}-1)\right)\\
&=& |D|+ 5R2^{i-r_0} - 3R2^{i-r_0} \geq R\cdot 2^{i-r_0} + |D|.
\end{eqnarray*}
We can therefore set $S_{i+1}$ to be any subset of $S'_{i+1}\cap V_1$ of cardinality $R\cdot 2^{i-r_0}$. For the sake of convenience we replace $S_r$ by some arbitrary subset of it of cardinality $\beta n_1$ and denote this new set by $S_r$. Finally, we construct similarly the set $S'_{r+1}$. By property \textbf{Q\ref{i:fbeta-expander_large}} we have $|N_G(S_r)|\geq(1+39\beta)2n_1/3$. Just as in the previous cases by \eqref{e:S_i}
\begin{eqnarray*}
|S'_{r+1}| &\geq& \frac{1}{2}\left(\frac{2n_1}{3}(1+39\beta)-3\left(\sum_{j=0}^{r_0-1}T^i+\sum_{j=r_0}^{r}R\cdot 2^{j-r_0}\right)\right)\\
&\geq&n_1\left(\frac{1}{3}+13\beta\right) - \frac{3}{2}T^{r_0}-\frac{3}{2}R\cdot2^{r-r_0+1}\\
&\geq&n_1\left(\frac{1}{3}+13\beta\right) - o(n_1) - 6\beta n_1\\
&>&\frac{n_1}{3}+6\beta n_1 > \frac{n}{3}+\beta n.
\end{eqnarray*}

Assume $S'_{r+1}\cap N_G(v_0)\neq \emptyset$, then $G$ must contain a cycle of length $\ell(G)$. This implies that $G$ is Hamiltonian. Assume otherwise0, then $\ell(G)<n$ and since $G$ is connected there is an edge emitting out of this cycle, creating a path of length $\ell(G)+1$ in $G$ which is a contradiction. This implies that $S'_{r+1}\subseteq B_G(v_0)$. Now take any endpoint $u_0$ in $S'_{r+1}$ and take a longest path $P'$ starting from $u_0$ (which must exist since all vertices of $S'_{r+1}$ are endpoints of longest paths starting in $v_0$) and repeat the same argument, while rotating $P'$ and keeping $u_0$ fixed. This way we obtain the desired set $|B_G(u_0)|$ of $n/3+\beta n$ endpoints for every $u_0\in S'_{r+1}$, thus completing the proof.
\end{proof}

\section{Structural properties of $\GNP$}\label{s:Random}
We start with a very simple claim regarding the number of edges in the binomial random graph model $\GNP$.
\begin{clm}\label{c:GNP_num_edges}
For every $p\geq\frac{\ln n}{n}$ w.h.p. $\frac{n^2p}{4}\leq e(\GNP)\leq n^2p$.
\end{clm}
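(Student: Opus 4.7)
The plan is immediate from Chernoff. Observe that $e(\GNP)\sim\Bin{\binom{n}{2}}{p}$ with mean $\mu=\binom{n}{2}p$. For $n$ large enough we have
\[
\left(1-\tfrac{1}{n}\right)\tfrac{n^2 p}{2}\;\leq\;\mu\;\leq\;\tfrac{n^2 p}{2},
\]
so the lower bound $\tfrac{n^2p}{4}$ in the claim corresponds to a deviation $(1-\alpha)\mu$ with $\alpha$ bounded away from $0$ (any $\alpha<\tfrac{1}{2}$ works for all sufficiently large $n$), and the upper bound $n^2p$ corresponds to $(1+\alpha')\mu$ with $\alpha'$ bounded away from $0$ (any $\alpha'>1$ works). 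In particular both are constant-factor deviations from the mean.

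Now invoke Theorem~\ref{t:Chernoff}: each of
\[
\Prob{e(\GNP)<\tfrac{n^2p}{4}}\quad\text{and}\quad \Prob{e(\GNP)>n^2p}
\]
is bounded above by $\exp(-c\mu)$ for some absolute constant $c>0$ (arising from $\tfrac{\alpha^2}{2}$ in part~\ref{i:Chernoff2} and from $(1+\alpha')\ln(1+\alpha')-\alpha'$ in part~\ref{i:Chernoff1}). Using the hypothesis $p\geq\tfrac{\ln n}{n}$ we get $\mu\geq(1-o(1))\tfrac{n\ln n}{2}\to\infty$, hence
\[
\exp(-c\mu)\;\leq\;\exp\!\left(-\tfrac{c}{3}\,n\ln n\right)\;=\;o(1).
\]
A union bound over the two tail events then shows both inequalities hold w.h.p., which is exactly the claim.

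There is no real obstacle: the only thing to be a bit careful about is that the two constants $\alpha,\alpha'$ are chosen so that $(1-\alpha)\mu\geq \tfrac{n^2p}{4}$ and $(1+\alpha')\mu\leq n^2p$ for all large $n$, which is automatic since $\mu=(1+o(1))\tfrac{n^2p}{2}$.
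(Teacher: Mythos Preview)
Your proposal is correct and follows essentially the same approach as the paper: both recognise that $e(\GNP)\sim\Bin{\binom{n}{2}}{p}$ with mean $\mu=(1+o(1))\tfrac{n^2p}{2}$, observe that the two thresholds $\tfrac{n^2p}{4}$ and $n^2p$ are constant-factor deviations from $\mu$, and apply Theorem~\ref{t:Chernoff} together with $p\geq\tfrac{\ln n}{n}$ to get $\exp(-\Theta(n\ln n))=o(1)$ tails. The paper merely instantiates explicit constants (taking $\alpha=0.49$ for the lower tail and comparing $n^2p$ with $2\binom{n}{2}p$ for the upper tail), whereas you leave them abstract; there is no substantive difference.
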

\begin{proof}
This is a simple application of Theorem \ref{t:Chernoff}. Clearly, $e(G)\sim\Bin{\binom{n}{2}}{p}$ then $\Prob{e(G)>n^2p}<\Prob{e(G)\geq 2\binom{n}{2}\cdot p}\leq\exp(-\frac{(n-1)\ln n}{6})=o(1)$. Similarly, $\Prob{e(G)<\frac{n^2p}{4}}\leq\Prob{e(G)<0.51\binom{n}{2}p}\leq\exp\left(\frac{0.49^2(n-1)\ln n}{4}\right)=o(1)$.
\end{proof}

The following is (a special case of) a well known property of the binomial random graph model. It provides a very precise answer to which values of $p$ does a typical graph in $\GNP$ has minimum degree at least $2$ (see e.g. \cite{Bol2001}).
\begin{thm}\label{t:GNP_min_deg}
Let $h(n)=\omega(1)$ be any function which grows arbitrarily slowly with $n$, then
\begin{enumerate}
\renewcommand{\labelenumi}{\arabic{enumi})}
\item if $p\geq \frac{\ln n+\ln\ln n+h(n)}{n}$ then w.h.p. $\delta(\GNP)\geq 2$;
\item if $p\leq \frac{\ln n+\ln\ln n-h(n)}{n}$ then w.h.p. $\delta(\GNP)< 2$.
\end{enumerate}
\end{thm}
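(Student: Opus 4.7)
The plan is the classical first-and-second moment count of vertices of small degree. Let $X_0$ and $X_1$ denote, respectively, the numbers of isolated vertices and of vertices of degree exactly one in $G\sim\GNP$, and put $X=X_0+X_1$, so that $X\geq 1$ if and only if $\delta(G)<2$. Since $d_G(v)\sim\Bin{n-1}{p}$ for each vertex $v$, linearity of expectation yields
$$\mathbb{E}[X_0]=n(1-p)^{n-1},\qquad \mathbb{E}[X_1]=n(n-1)p(1-p)^{n-2},$$
and a routine expansion of $(1-p)^{n-1}=\exp(-(n-1)p+O(np^2))$ at $p=(\ln n+\ln\ln n+c)/n$ gives $\mathbb{E}[X_0]=\Theta(e^{-c}/\ln n)$ and $\mathbb{E}[X_1]=e^{-c}(1+o(1))$, where $c=\pm h(n)$ depending on which side of the threshold $p$ sits.

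For part (1), monotonicity of the event $\{\delta(G)\geq 2\}$ in $p$ lets one assume $p=(\ln n+\ln\ln n+h(n))/n$, so $c=h(n)\to\infty$, whence $\mathbb{E}[X]\to 0$ and Markov's inequality gives $\Pr[\delta(G)<2]=\Pr[X\geq 1]=o(1)$, which settles the first half of the theorem.

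For part (2), again by monotonicity I would take $p=(\ln n+\ln\ln n-h(n))/n$, so that $\mathbb{E}[X_1]=e^{h(n)}(1+o(1))\to\infty$, and apply Chebyshev's inequality to $X_1$. The only genuine work is the second moment estimate: I would expand $\mathbb{E}[X_1^2]=\sum_{u,v}\Pr[d_G(u)=d_G(v)=1]$ and split ordered pairs $(u,v)$ with $u\neq v$ according to whether $uv\in E(G)$. The dominant contribution comes from non-adjacent pairs whose unique neighbors lie in $V\setminus\{u,v\}$ and asymptotically equals $\mathbb{E}[X_1]^2$; the adjacent-pair contribution is of order $n^2 p(1-p)^{2n-4}=o(\mathbb{E}[X_1]^2)$ since $np\to\infty$, and the diagonal term $\mathbb{E}[X_1]$ is also $o(\mathbb{E}[X_1]^2)$. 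Hence $\mathrm{Var}(X_1)=o(\mathbb{E}[X_1]^2)$ and Chebyshev forces $X_1\geq 1$ w.h.p., so $\delta(G)\leq 1<2$. The main obstacle is essentially bookkeeping: the $(1-p)^{n}$ factors must be tracked precisely enough to see how $e^{\pm h(n)}$ tips $\mathbb{E}[X]$ between $o(1)$ and $\omega(1)$, and one must verify that the non-adjacent pairs produce $\mathbb{E}[X_1]^2(1+o(1))$ rather than an unwelcome constant factor — both are standard manipulations near the $\GNP$ connectivity threshold.
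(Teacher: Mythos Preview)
Your argument is correct: the first-moment bound on $X=X_0+X_1$ for part (1) and the second-moment computation for $X_1$ in part (2) go through exactly as you outline, including the monotonicity reductions to the boundary value of $p$ and the case split for $\mathbb{E}[X_1^2]$ according to whether $uv\in E(G)$.

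As for the comparison: the paper does not actually prove this theorem. It is stated as ``(a special case of) a well known property of the binomial random graph model'' and referred to Bollob\'as's monograph \cite{Bol2001}; no argument is given in the paper itself. Your first/second moment calculation is precisely the standard proof one finds in that reference (or in \cite{JanLucRuc2000}), so there is no genuine methodological difference --- you have simply supplied the textbook argument that the authors chose to cite rather than reproduce.
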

Recall that given a graph $G$ we defined the set $\mD_t(G)=\{v\in V\;:\; d_G(v)<t\}$. We prove some structural properties of the set $\mD_t(G)$ where $G$ is sampled from the random graph model $\GNP$.
\begin{clm}\label{c:Small-size}
For every $p\geq\frac{\ln n}{n}$ and integer $t\leq\frac{np}{100}$ w.h.p. $|\mD_t(\GNP)|\leq n^{0.09}$. 
\end{clm}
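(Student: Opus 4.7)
The plan is to show $\Exp{|\mD_t(\GNP)|}=o(n^{0.09})$ and invoke Markov's inequality. Fix a vertex $v\in[n]$; its degree in $G\sim\GNP$ is distributed as $\Bin{n-1}{p}$ with mean $m:=(n-1)p\geq(1-o(1))\ln n$ by the hypothesis $p\geq\ln n/n$. Since $t\leq np/100$ sits deep in the lower tail, I would bound $\Prob{d_G(v)<t}$ by analyzing the tail sum of this binomial directly.

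For every $k$ with $k+1\leq np/100$ the successive-term ratio
\[
\frac{\Prob{d_G(v)=k+1}}{\Prob{d_G(v)=k}}=\frac{(n-1-k)p}{(k+1)(1-p)}\geq 99(1-o(1)),
\]
so the sum is dominated, up to a factor smaller than $2$, by its last term, giving
\[
\Prob{d_G(v)<t}\leq 2\binom{n-1}{t-1}p^{t-1}(1-p)^{n-t}.
\]
Writing $r=(t-1)/m\leq 1/100$ and applying $\binom{n-1}{t-1}\leq\bigl(e(n-1)/(t-1)\bigr)^{t-1}$ together with $1-p\leq e^{-p}$, this simplifies to
\[
\Prob{d_G(v)<t}\leq 2\exp\bigl(-m\,h(r)\bigr),\qquad h(r):=1-2r-r\ln(1/r).
\]
A one-line derivative check ($h'(r)=-1+\ln r<0$ on $(0,1)$) shows $h$ is decreasing, hence $h(r)\geq h(1/100)=0.98-\ln(100)/100>0.93$ throughout the admissible range of $r$.

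Substituting $m\geq(1-o(1))\ln n$ yields $\Prob{d_G(v)<t}\leq n^{-0.93+o(1)}$, and linearity of expectation gives $\Exp{|\mD_t(G)|}\leq n^{0.07+o(1)}$. Markov's inequality then produces $\Prob{|\mD_t(G)|\geq n^{0.09}}\leq n^{-0.02+o(1)}=o(1)$, as required. The main subtlety worth flagging is that Theorem~\ref{t:Chernoff}(\ref{i:Chernoff2}) as stated in the paper is too weak here: the coefficient $\alpha^2/2$ is bounded by $1/2$, giving only $\Prob{d_G(v)<t}\leq n^{-0.49+o(1)}$ and hence $\Exp{|\mD_t|}\leq n^{0.51+o(1)}$, which is useless against the target $n^{0.09}$. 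One either needs the sharper relative-entropy form of the Chernoff bound, or (equivalently) the direct binomial computation sketched above; the constant $1/100$ in the hypothesis was chosen precisely so that $h(1/100)$ comfortably exceeds $0.9$, leaving enough slack for the Markov step.
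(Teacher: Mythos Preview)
Your proof is correct and follows essentially the same route as the paper: bound $\Prob{d_G(v)<t}$ by a direct estimate of the binomial lower tail (the paper multiplies the largest term by $t_0$ rather than by your factor $2$, but the ensuing Stirling-type simplification is the same), obtain a per-vertex bound of order $n^{-0.92}$, and finish with linearity of expectation plus Markov. Your closing observation that the quadratic-exponent form of Theorem~\ref{t:Chernoff}(\ref{i:Chernoff2}) is too weak for this step is correct and a useful remark.
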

\begin{proof}   
Let $G\sim\GNP$, then setting $t_0=\frac{np}{100}$, we can bound the probability that a vertex is in $\mD_t(G)$ as follows. 
\begin{eqnarray*}
\Prob{d_G(v)<t}&\leq&\Prob{\Bin{n-1}{p}<t_0}\\
&\leq&\sum_{i=0}^{t_0-1}\binom{n-1}{i}p^{i}(1-p)^{n-1-i}\\
&\leq&t_0\cdot\binom{n-1}{t_0}p^{t_0}(1-p)^{n-1-t_0}\\
&\leq&t_0\cdot\left(\frac{e(n-1)p}{t_0}\right)^{t_0}e^{-p(n-1-t_0)}\\
&\leq&\exp\left(-np+p+\frac{np^2}{100}+\frac{np}{100}\left(1+\ln 100\right)+\ln\frac{np}{100}\right)\\
&<&e^{-0.92np}\\
&\leq&n^{-0.92}.
\end{eqnarray*}
This implies that $\Exp{\mD_t(\GNP)}\leq n^{0.08}$, and the claim follows from Markov's inequality.
\end{proof}

\begin{clm}\label{c:Small-nolongpath}   
For every $p\geq\frac{\ln n}{n}$ and integer $t\leq\frac{np}{100}$, w.h.p $G\sim\GNP$ does not contain a non-empty path of length at most $\frac{2\ln n}{3\ln\ln n}$ such that both of its (possibly identical) endpoints lie in $\mD_t(G)$.
\end{clm}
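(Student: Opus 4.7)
The plan is a straightforward first-moment argument. Set $L := \frac{2\ln n}{3\ln\ln n}$. I will upper-bound the expected number of paths of length $1 \leq \ell \leq L$ whose endpoints (possibly coincident) both lie in $\mD_t(G)$, and show this expectation is $o(1)$; Markov's inequality then completes the proof.

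For the main case, in which the two endpoints are distinct, I would fix an ordered sequence of distinct vertices $(v_0, v_1, \ldots, v_\ell)$ with $1 \leq \ell \leq L$ and condition on the event $A$ that the $\ell$ consecutive edges $v_i v_{i+1}$ are all present (probability $p^\ell$). The key observation is that conditioned on $A$, the counts $X_{v_0}$ and $X_{v_\ell}$ of neighbors of $v_0$ and $v_\ell$ outside $\{v_0, \ldots, v_\ell\}$ are independent copies of $\Bin{n-\ell-1}{p}$, since they involve a disjoint edge set. If $v_0, v_\ell \in \mD_t(G)$, then certainly $X_{v_0}, X_{v_\ell} < t$. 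Rerunning the computation of Claim \ref{c:Small-size} with $n-\ell-1$ in place of $n-1$ (the difference is negligible because $\ell \leq L = o(n)$ and $t \leq np/100$) gives $\Prob{X_{v_0} < t} \leq e^{-0.9np}$, so a given candidate path contributes at most $p^\ell e^{-1.8np}$.

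Since there are at most $n^{\ell+1}$ ordered sequences, summing over $\ell$ gives an overall expected count bounded by $n \cdot e^{-1.8np} \sum_{\ell=1}^{L} (np)^\ell$. The identical-endpoints case, where the ``path'' is a closed walk (cycle) through a single low-degree vertex, is completely analogous but with only one degree factor, yielding a strictly smaller contribution of order $e^{-0.9np} \sum_{\ell=1}^{L} (np)^\ell$.

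The step that deserves the most care, although it is not a real obstacle, is verifying that both sums are $o(1)$ uniformly for $p \geq \ln n / n$. This reduces to checking $L \ln(np) \ll np$ in the regime, and the worst case sits at the boundary $np = \ln n$: there $L \ln(np) = \frac{2}{3}\ln n$ against $1.8 np = 1.8\ln n$, so $(np)^L = n^{2/3+o(1)}$ and the main expression is bounded by $n^{5/3 - 1.8 + o(1)} = o(1)$. For larger $p$, the exponential factor $e^{-1.8np}$ crushes $(np)^L$ effortlessly, since $L$ grows only as $\ln n / \ln\ln n$ while $np$ is at least logarithmic. No structural input beyond Claim \ref{c:Small-size} is needed.
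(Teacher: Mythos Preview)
Your proposal is correct and follows essentially the same first-moment argument as the paper: fix a candidate path, multiply $p^\ell$ by a tail bound on the endpoints having low degree, then sum over all candidates and note the worst case occurs at $np=\ln n$. The only cosmetic difference is that the paper bounds the two endpoint degrees jointly via a single binomial $\Bin{2n-6}{p}$ for the combined neighborhood, whereas you bound them separately using independence and reuse Claim~\ref{c:Small-size}; both routes yield the same $e^{-1.8np}$ factor.
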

\begin{proof}
Setting $t_0=\frac{np}{100}$, we prove the claim for two distinct endpoints in $\mD_{t_0}(G)$, and for paths of length $r$ where $2\leq r\leq \frac{2\ln n}{3\ln \ln n}$. The other cases (i.e. identical endpoints or $r=1$) are similar and a little simpler. Fix two vertices $u,w\in V(G)$ and let $P = (u=v_0,\ldots,v_r=w)$ be a sequence of vertices of $V(G)$, where $2\leq r\leq \frac{2\ln n}{3\ln\ln n}$. Denote by $\mA_P$ the event $\{v_i, v_{i+1}\}\in E(G)$ for every $0 \leq i \leq r-1$, and by $\mB_{u,w}$ the event that both $u$ and $w$ are elements of $\mD_{t_0}(G)$. Clearly, $\Prob{\mA_P}=p^r$, then
\begin{equation*}
\Prob{\mB_{u,w}\wedge\mA_P}=p^r\cdot\cProb{\mB_{u,w}}{\mA_P}.
\end{equation*} 
Let $X_{u,w}$ denote the random variable which counts the number of edges in $G$ incident with $u$ or $w$ disregarding the pairs $\{u,v_1\}$, $\{v_{r-1},w\}$, and $\{u,w\}$. We can therefore bound $\cProb{\mB_{u,w}}{\mA_P}\leq\Prob{X_{u,w}<2t_0-2}$ and as $X_{u,w}\sim\Bin{2n-6}{p}$, it follows that  
\begin{eqnarray*}
\Prob{X_{u,w}<2t_0-2}&\leq&\sum_{i=0}^{2t_0-2}\binom{2n-6}{i}p^i(1-p)^{2n-6-i}\\
&\leq&\frac{np}{50}\binom{2n-6}{2t_0-2}p^{2t_0-2}(1-p)^{2n-4-2t_0}\\
&\leq&\frac{np}{50}\cdot\left(\frac{e(2n-6)p}{2t_0-2}\right)^{2t_0}e^{-p(2n-4-2t_0)}\\
&\leq&\exp\left(-2np+p\left(4+\frac{np}{50}\right)+\frac{np}{50}\left(2+\ln 100\right)+\ln\frac{np}{50}\right)\\
&<&e^{-1.8np}.
\end{eqnarray*} 
Fixing the two endpoints $u,w$, the number of such sequences is at most $(r-1)!\binom{n}{r-1}\leq n^{r-1}$. Applying a union bound argument over all such pairs of vertices and possible sequences connecting them we conclude that the probability there exists a path in $G$ of length $r \leq \frac{2\ln n}{3\ln\ln n}$, connecting two vertices of $\mD_t(G)$ is at most
\begin{eqnarray*}
\sum_{r=1}^{\frac{2\ln n}{3\ln\ln n}}\binom{n}{2}\cdot n^{r-1}\cdot p^r\cdot e^{-1.8np}&\leq&\sum_{r=1}^{\frac{2\ln n}{3\ln\ln n}}\frac{n^{r+1}}{2}\cdot\frac{\ln^r n}{n^r}\cdot n^{-1.8}\\
&\leq&\frac{\ln n}{3\ln\ln n}\cdot n^{-0.8}\cdot (\ln n)^{\frac{2\ln n}{3\ln\ln n}}\\
&=& o(1),
\end{eqnarray*}
where the first inequality follows by noting that the expression on l.h.s. decreases as $p$ grows, hence we can replace $p$ by $\frac{\ln n}{n}$. This completes the proof of the claim.
\end{proof}
Given a graph $G$ and some $t>0$ (which may depend on $n$ and $p$), we denote by $G_1(t)=G_1=(V_1,E_1)$ the graph $G_1=G[V\setminus \mD_t(G)]$ and by $n_1$ its number of vertices, i.e. $|V_1|=|V\setminus \mD_t(G)|=n_1$. The following lemma, which contains the main technical result of this section, implies that if $p\geq\frac{\ln n}{n}$ then the removal of the vertices of low degree from a typical graph $G$ sampled from $\GNP$ leaves a graph $G_1$ which is robust in the following sense: The deletion of almost a third of the edges at each vertex of $G_1$ leaves a graph with some strong pseudo-random properties.

\begin{lem}\label{l:GNP_r_quasirand}
For every fixed $\varepsilon>0$, there exists a small enough constant $\beta_0=\beta_0(\varepsilon)>0$ such that for every $0<\beta\leq\beta_0$, $p\geq\frac{\ln n}{n}$ and $t=\frac{np}{100}$, if $G\sim\GNP$ then w.h.p. for any subgraph $H_1\subseteq G_1(t)=G_1$ such that $\bd_{H_1}\leq (\frac{1}{3}-\varepsilon)\bd_{G}$, the graph $G_1-H_1$ is $(n_1,np,\beta)$-quasi-random.
\end{lem}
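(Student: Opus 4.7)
The plan is to verify each of the four properties \textbf{P0}--\textbf{P3} of $(n_1, np, \beta)$-quasi-randomness for $G_1 - H_1$ separately, using the tail bounds of Section~2 together with the structural results of Claims~4.2--4.4. Throughout, Claim~4.3 gives $|\mathcal{D}_t(G)| \leq n^{0.09}$, so $n_1 = (1-o(1)) n$ and I shall treat $n_1$ and $n$ as interchangeable in asymptotic estimates.

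For \textbf{P0}, every $v \in V_1$ satisfies $d_G(v) \geq t = np/100$ by definition, while Claim~4.4 applied with path length $r=2$ ensures that w.h.p.\ no vertex has two neighbors in $\mathcal{D}_t(G)$, so $d_G(v, \mathcal{D}_t) \leq 1$. Combining with the hypothesis $d_{H_1}(v) \leq (\tfrac13 - \varepsilon) d_G(v)$ yields
\[
d_{G_1 - H_1}(v) \;\geq\; d_G(v) - 1 - (\tfrac13 - \varepsilon)d_G(v) \;=\; (\tfrac23 + \varepsilon) d_G(v) - 1 \;\geq\; \frac{np}{150} + \frac{\varepsilon np}{100} - 1,
\]
which exceeds $np/150$ for $n$ large. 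For \textbf{P1} and \textbf{P2}, the monotonicity $e_{G_1 - H_1}(U) \leq e_G(U)$ reduces the task to controlling $e_G(U)$. For a set of size $k$, the thresholds $k(np)^{0.13}$ and $50\beta knp$ dominate the mean $\binom{k}{2}p$ by a factor that grows with $n$, so Lemma~2.5 (or Chernoff) gives a superexponentially small per-set probability, which absorbs the $\binom{n}{k}$ factor in the union bound. In \textbf{P2}, the trivial regime $k \leq 100\beta np$ (where $\binom{k}{2} \leq 50\beta knp$ automatically) requires no randomness at all.

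The main step, and the one that pins down the choice of $\beta_0$, is \textbf{P3}. Counting edges removed by $H_1$ from the $U$-side,
\[
e_{G_1 - H_1}(U, Z) \;\geq\; e_G(U, Z) - \sum_{u \in U} d_{H_1}(u) \;\geq\; e_G(U, Z) - (\tfrac13 - \varepsilon) \sum_{u \in U} d_G(u).
\]
For a fixed pair $(U, Z)$, Chernoff's bound yields, for any small $\delta > 0$,
\[
\Prob{e_G(U, Z) < (1-\delta)|U||Z|p} \leq \exp(-c\delta^2 \beta n^2 p), \qquad \Prob{\textstyle\sum_{u \in U} d_G(u) > (1+\delta)|U|np} \leq \exp(-c' \delta^2 \beta n^2 p).
\]
Since there are at most $4^n$ candidate pairs while the failure exponents are of order $\beta n^2 p \gg n$, a union bound gives both estimates uniformly w.h.p. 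Combining,
\[
e_{G_1 - H_1}(U, Z) \;\geq\; \beta n^2 p \bigl[(1-\delta)(\tfrac13 - 27\beta) - (1+\delta)(\tfrac13 - \varepsilon)\bigr] \;=\; \beta n^2 p \bigl[\varepsilon - 27\beta - O(\delta)\bigr].
\]
Setting $\beta_0 := \varepsilon/54$ and $\delta$ correspondingly small makes the bracket at least $\varepsilon/3$, whence $e_{G_1 - H_1}(U, Z) = \Omega(\beta\varepsilon \cdot n \cdot np) = \omega(n\log\log n)$ because $np \geq \ln n$. The crux, and the main obstacle, is precisely this algebraic balance: the $H_1$-degrees at $U$ can consume up to a $(\tfrac13 - \varepsilon)$-fraction of each $d_G(u)$, while the $G$-edges from $U$ to any $Z$ of size $(\tfrac13 - 27\beta)n$ account for only a $(\tfrac13 - 27\beta)$-fraction of the neighborhood, forcing $\beta_0$ to be chosen strictly below $\varepsilon/27$.
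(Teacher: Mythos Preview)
Your argument is correct and follows the same route as the paper's own proof: establish \textbf{P0} via Claim~4.4, reduce \textbf{P1}--\textbf{P2} by monotonicity to tail bounds on $e_G(U)$, and for \textbf{P3} bound $e_{G_1-H_1}(U,Z)\geq e_G(U,Z)-(\tfrac13-\varepsilon)\sum_{u\in U}d_G(u)$ with concentration for each term and a union bound over all pairs $(U,Z)$. The only cosmetic difference is that the paper invokes the bounded-variance form of Azuma--Hoeffding for $\sum_{u\in U}d_G(u)$ (this sum is not literally binomial, so your appeal to ``Chernoff'' should be read as decomposing into $2e_G(U)+e_G(U,V\setminus U)$ or as a Bernstein-type bound for weighted Bernoulli sums); either way the exponent is $\Theta(\delta^2\beta n^2p)$, which comfortably beats the $4^n$ count.
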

\begin{proof}
First, by Claim \ref{c:Small-nolongpath} we can assume that in $G$ every vertex of $V_1$ has at most one neighbor in $\mD_t(G)$, hence for every $v\in V_1$ we have $d_{G_1}(v)\geq d_G(v)-1$, and therefore $\delta(G_1)\geq t-1$. It follows that $\delta(G_1-H_1)>\left(\frac{2}{3}+\varepsilon\right)\delta(G_1)>\frac{2t}{3}=\frac{np}{150}$. Second, using Claim \ref{c:Small-size} we can, and will, assume that $n_1\geq n-n^{0.09}=n(1-o(1))$. The rest of the properties, and hence the proof of the lemma, will be a simple consequence from the following series of claims. We stress that throughout we will not compute $\beta$ explicitly, but we will assume it is small enough as a function of $\varepsilon$ for the arguments to go through.

\begin{clm}\label{c:GNP_small_sets}
W.h.p. every $U\subseteq V$ of cardinality $|U|\leq n^{0.11}\ln n$ satisfies $e_{G_1-H_1}(U)\leq e_{G}(U)\leq(np)^{3/25}|U|$.
\end{clm}
\begin{proof}   
Fixing such a subset of vertices $U$ of cardinality $u\leq n^{0.11}\ln n$, we have that $e_G(U)\sim\Bin{\binom{u}{2}}{p}$ and since $e\binom{u}{2}p\leq u(np)^{3/25}\cdot n^{-1/200}$ we have by Lemma \ref{l:BeckChe} that
\begin{eqnarray*}
\Prob{e_{G}(U)>(np)^{3/25}u}<\left(\frac{e\binom{u}{2}p}{(np)^{3/25}u}\right)^{(np)^{3/25}u}&\leq&\exp\left(-\frac{(np)^{3/25}\cdot u\cdot\ln n}{200}\right)\\
&\leq&\exp\left(-\frac{(\ln n)^{1.12}\cdot u}{200}\right).
\end{eqnarray*}
To upper bound the probability of the existence of a subset of vertices for which the assertion of the claim does not hold, we apply the union bound over all possible sets $U$ of cardinality $u\leq n^{0.11}\ln n$
\begin{eqnarray*}
\sum_{u=1}^{n^{0.11}\ln n}\binom{n}{u}\exp\left(-\frac{(\ln n)^{1.12}\cdot u}{200}\right)&\leq&\sum_{u=1}^{n^{0.11}\ln n}\exp\left(u\cdot\left(\ln\frac{en}{u}-\frac{(\ln n)^{1.12}}{200}\right)\right)\\
&\leq&n^{0.11}\ln n\cdot\exp\left(-(\ln n)^{1.1}\right)=o(1).
\end{eqnarray*}
\end{proof}

\begin{clm}
W.h.p. every $U\subseteq V$ of cardinality $|U|\leq 12\beta n$ satisfies $e_{G_1-H_1}(U)\leq e_{G}(U)\leq 50\beta np|U|$.
\end{clm}
\begin{proof}   
Very similarly to the proof of Claim \ref{c:GNP_small_sets} we fix a subset of vertices $U$ of cardinality $1\leq u\leq 12\beta n$, then noting that $e_G(U)\sim\Bin{\binom{u}{2}}{p}$ and since $\binom{u}{2}p\leq 50\beta npu$ we have by Lemma \ref{l:BeckChe} that
$$\Prob{e_{G}(U)>50\beta np u}<\left(\frac{e\binom{u}{2}p}{50\beta np u}\right)^{50\beta np u}\leq\exp\left(-50\beta npu\ln\frac{100\beta n}{eu}\right).$$ 
To upper bound the probability of the existence of a subset of vertices for which the assertion of the claim does not hold, we apply the union bound over all possible sets $U$ of cardinality $1\leq u\leq 12\beta n$
\begin{eqnarray*}
\sum_{u=1}^{12\beta n}\binom{n}{u}\exp\left(-50\beta np u\ln\frac{100\beta n}{eu}\right)&\leq&
\sum_{u=1}^{12\beta n}\exp\left(u\cdot\left(\ln\frac{en}{u}-50\beta np\ln\frac{100\beta n}{eu}\right)\right)\\
&\leq&\sum_{u=1}^{12\beta n}\exp\left(u\cdot\left(\ln\frac{n}{u}\left(1-50\beta \ln n\right) + 1 - 50\beta\ln n\ln\frac{100\beta}{e}\right)\right)\\
&=&o(1).
\end{eqnarray*}
\end{proof}

\begin{clm}
W.h.p. every two disjoint subsets $U,Z\subseteq V_1$ where $|U|=\beta n_1$ and $|Z|=n_1\left(\frac{1}{3}-27\beta\right)$ satisfy $e_{G_1-H_1}(U,Z)\geq n_1\ln\ln n_1$.
\end{clm}
\begin{proof}
Fix two such disjoint subsets of vertices $U,Z\subseteq V_1$ of the required cardinalities and note that 
$$e_{G_1-H_1}(U,Z) = e_{G_1}(U,Z)-e_{H_1}(U,Z)\geq e_{G}(U,Z)- \sum_{v\in U}d_{H_1}(v)\geq e_{G}(U,Z)- \left(\frac{1}{3}-\varepsilon\right)\sum_{v\in U}d_{G}(v).$$

As $e_G(U,Z)\sim\Bin{|U|\cdot|Z|}{p}$, we can apply Theorem \ref{t:Chernoff} item \ref{i:Chernoff2} and get that 
$$\Prob{e_G(U,Z)<\left(\frac{1}{3}-28\beta\right)n_1\cdot|U|\cdot p}\leq e^{-\Theta(n_1^2p)}=o(4^{-n_1}),$$
where the hidden constants in the exponent above are functions of $\beta$ alone. Next, we prove that the random variable $X(G)=\sum_{v\in U}d_{G}(v)$ is very likely not to deviate much from its expectation. To achieve this we resort to the Azuma-Hoeffding inequality for martingales of bounded variance (see e.g. \cite[Theorem 7.4.3]{AloSpe2008}). Note that for any two graphs on the vertex set $V$ that differ by a single edge, their value of $X$ can change by at most $2$, and $\Exp{X(G)}=|U|(n-1)p<\left(1+o(1)\right)|U|n_1p$. In order to divulge the value of $X$ one only needs to expose the pairs of vertices which have at least one endpoint in $U$. This implies that the total variance of the martingale is at most $\beta n_1^2p(1-p)$, and hence
$$\Prob{X(G)>(1+\beta)|U|n_1p}\leq e^{-\Theta(n_1^2p)}=o(4^{-n_1}),$$
where the hidden constants in the exponent above are, again, functions of $\beta$ alone. Recalling that $\beta$ can be made small enough with respect to $\varepsilon$ and that $|U|n_1p\gg n_1\ln\ln n_1$ we have that
$$\left(\frac{1}{3}-28\beta\right)>\left(\frac{1}{3}-\varepsilon\right)(1+\beta) + \frac{n_1\ln\ln n_1}{|U|n_1p},$$
and hence $\Prob{e_{G_1-H_1}(U,Z)<n_1\ln\ln n_1}=o(4^{-n_1})$. By applying the union bound over all pairs of subsets of vertices $U$ and $Z$, the proof of the claim in completed.
\end{proof}
Recalling Definition \ref{d:quasirand} completes the proof of the lemma.
\end{proof}

We can now present the main result of this section which based on the above is readily established. In what follows let $G=(V,E)$ with vertex set $V=[n]$ be sampled from $\GNP$ and let $t=\frac{np}{100}$. We recall that for every $\varepsilon>0$ if $\bd=(d_1,\ldots,d_n)$ is the degree sequence of $G$, then $\wbd(t,\varepsilon)=(\widetilde{d}_1,\ldots,\widetilde{d}_n)$ is the sequence defined by $\widetilde{d}_v=d_v-2$ for every $v\in \mD_t(G)$ and $\widetilde{d}_v=d_v(1/3-\varepsilon)$ for every $V_1=V\setminus\mD_t(G)$. 
\begin{cor}\label{c:GNP_contains_sparse_L}
For every fixed $\varepsilon>0$, there exists a small enough constant $\beta_0=\beta_0(\varepsilon)>0$ such that for every $0<\beta\leq\beta_0$ and $p\geq\frac{\ln n+\ln\ln n+\omega(1)}{n}$ the following holds. W.h.p. every subgraph $H\subseteq G$ of degree sequence $\bd_H\leq\wbd(t,\varepsilon)$ is such that the graph $G-H$ contains a subgraph $\Gamma_0\in\mL(n,\beta)$ which spans at most $2\cdot10^6\beta n^2p$ edges. Moreover, adding to $\Gamma_0$ any subset of edges $E_0\subseteq E$ results in a graph in $\mL(n,\beta)$ and the partition $V=\mD_t(G)\cup V_1$ guarantees that $\Gamma_0+E_0$ is in $\mL(n,\beta)$.
\end{cor}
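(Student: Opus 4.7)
The plan is to assemble the required subgraph $\Gamma_0$ by combining an expander subgraph produced by the earlier lemmas on $V_1$ with a small number of carefully chosen edges attached to the low-degree vertices $D=\mD_t(G)$, using $t=np/100$ throughout.

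First, I would invoke the structural claims proved earlier in this section to set the scene. By Claim \ref{c:Small-size}, w.h.p. $|D|\leq n^{0.09}$, which immediately gives property \textbf{L\ref{i:L-family-small-size}}. By Claim \ref{c:Small-nolongpath}, w.h.p. $G$ itself contains no path of length at most $\frac{2\ln n}{3\ln\ln n}$ between vertices of $D$; in particular $D$ is an independent set in $G$, so every edge of $G$ incident to a vertex of $D$ actually goes to $V_1$. Since $\Gamma_0\subseteq G$, property \textbf{L\ref{i:L-family-small-nopath}} will be automatic.

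Next, to produce the expanding part on $V_1$, I would apply Lemma \ref{l:GNP_r_quasirand} to $G$. The hypothesis $\bd_H\leq\wbd(t,\varepsilon)$ forces $\bd_{H_1}\leq(\tfrac13-\varepsilon)\bd_G$ on $V_1$ (where $H_1=H[V_1]$), so the lemma yields that w.h.p. $G_1-H_1$ is $(n_1,np,\beta)$-quasi-random for any appropriate choice of $\beta\leq\beta_0(\varepsilon)$. Feeding this into Lemma \ref{l:GminHcontainsGamma} (applied with $d=np\geq\ln n_1$ and the same $\beta$), I obtain a $(\beta n_1,f_\beta)$-expander subgraph $\Gamma_1\subseteq G_1-H_1$ with $e(\Gamma_1)\leq 10^6\beta\, e(G_1-H_1)\leq 10^6\beta\, e(G)\leq 10^6\beta n^2 p$, where the last bound uses Claim \ref{c:GNP_num_edges}.

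I would then build $\Gamma_0$ by taking $\Gamma_1$ together with, for each $u\in D$, a choice of two edges of $G-H$ from $u$ to $V_1$. Such edges exist because the constraint $\bd_H\leq\wbd(t,\varepsilon)$ forces $d_H(u)\leq d_G(u)-2$ on $D$, giving $d_{G-H}(u)\geq 2$, and since $D$ is independent in $G$ all remaining neighbors of $u$ lie in $V_1$. This guarantees \textbf{L\ref{i:L-family-small-deg}}, preserves \textbf{L\ref{i:L-family-big}} (since $\Gamma_0[V_1]\supseteq\Gamma_1$ and Remark \ref{r:supset_fkexpander} keeps expansion under edge addition), and the additional $2|D|\leq 2n^{0.09}$ edges are negligible compared to $10^6\beta n^2 p$, so $e(\Gamma_0)\leq 2\cdot 10^6\beta n^2 p$ comfortably.

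Finally, for the \emph{moreover} clause I would observe that adding an arbitrary edge set $E_0\subseteq E$ to $\Gamma_0$ does not spoil membership in $\mL(n,\beta)$ with the same partition $V=D\cup V_1$: property \textbf{L\ref{i:L-family-small-size}} is a statement about $D$ alone and unchanged; \textbf{L\ref{i:L-family-small-deg}} only improves as degrees grow; \textbf{L\ref{i:L-family-small-nopath}} is inherited from $G$ itself because $\Gamma_0+E_0\subseteq G$ and Claim \ref{c:Small-nolongpath} rules out all such paths in $G$; and \textbf{L\ref{i:L-family-big}} is preserved by Remark \ref{r:supset_fkexpander}. The main conceptual step, and the one requiring the most care, is the alignment of hypotheses: ensuring that the constraints imposed by $\bd_H\leq\wbd(t,\varepsilon)$ split correctly into the ``$-2$ on $D$'' part (which supplies the two guaranteed edges into $V_1$) and the ``$(1/3-\varepsilon)$-fraction on $V_1$'' part (which is exactly what Lemma \ref{l:GNP_r_quasirand} digests); everything else is bookkeeping.
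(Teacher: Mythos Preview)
Your proposal is correct and follows essentially the same route as the paper: assemble the structural properties of $G$ from Claims \ref{c:Small-size}, \ref{c:Small-nolongpath}, and \ref{c:GNP_num_edges}, feed $G_1-H_1$ through Lemma \ref{l:GNP_r_quasirand} and then Lemma \ref{l:GminHcontainsGamma} to obtain the sparse expander on $V_1$, attach edges from $D$ into $V_1$, and verify the \textbf{L}-properties and their monotonicity for the ``moreover'' part. The only cosmetic difference is that you attach exactly two $(G-H)$-edges from each $u\in D$ to $V_1$, whereas the paper attaches \emph{all} edges of $G-H$ incident to $D$; both choices satisfy \textbf{L\ref{i:L-family-small-deg}} and both fit comfortably under the $2\cdot 10^6\beta n^2p$ edge budget, so this distinction is immaterial.
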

\begin{proof}
Fix $\varepsilon>0$, let $\beta_0$ be as guaranteed by Lemma \ref{l:GNP_r_quasirand}, let $0<\beta\leq\beta_0$, and set $t=\frac{np}{100}$. Fix a subgraph $H\subseteq G$ with degree sequence $\bd_H\leq\wbd(t,\varepsilon)$, denote by $G_1=G_1(t)=(V_1,E_1)$, let $H_1=H[V_1]$, and set $n_1=|V_1|$. We can assume that $G$ satisfies the following properties:
\begin{enumerate}
\renewcommand{\labelenumi}{\arabic{enumi})}
\item $\delta(G-H)\geq 2$ (Theorem \ref{t:GNP_min_deg}).
\item $|\mD_t(G)|\leq n^{0.09}$ (Claim \ref{c:Small-size}).
\item There is no path of length at most $\frac{2\ln n}{3\ln\ln n}$ with both (possibly identical) endpoints in $|\mD_t(G)|$ (Claim \ref{c:Small-nolongpath}),
\item $G_1-H_1$ contains a $(\beta n_1,f_{\beta})$-expander spanning subgraph $\Gamma$ with at most $10^6\beta n^2p$ edges (Lemmata \ref{l:GNP_r_quasirand} and \ref{l:GminHcontainsGamma} where Claim \ref{c:GNP_num_edges} can be used to bound the number of edges in $e(G_1)$).
\end{enumerate}
To get the graph $\Gamma_0$, we add to the graph $\Gamma$ the set of vertices $D=\mD_t(G)$ with all of its incident edges from $G-H$. Note that $\Gamma_0\in\mL(n,\beta)$ (using the partition $V=V_1\cup D$) and that $e(\Gamma_0)=e(\Gamma)+e_{G-H}(D,V_1)\leq 10^6\beta n^2p + t\cdot n^{0.1}<2\cdot10^6\beta n^2p$ as claimed.

Consider any subset of edges $E_0\subseteq E(G)$. As $D$ is independent, all edges of $E_0$ which are not in $\Gamma_0$ must have at least one endpoint in $V_1$. The addition cannot create a path of length at most $\frac{2\ln n}{3\ln\ln n}$ between endpoints in $D$ since no such path exists in $G$. The addition of any edge from $E_G(D,V_1)$ to $\Gamma_0$ can only increase the degree of every vertex from $D$, and, finally, the addition of any edge from $E_G(V_1)$ to $\Gamma_0$ clearly leaves the induced subgraph on $V_1$ as a $(\beta n_1,f_{\beta})$-expander (as this is a monotone increasing graph property), and therefore the same partition of the vertex set $V=V_1\cup D$ also implies that $\Gamma_0+E_0\in\mL(n,\beta)$.
\end{proof}

\section{Proof of Theorem \ref{t:HamResGnp}}\label{s:ProofHamResGnp}
We can now provide the full proof of the main result of this paper, namely the proof of Theorem \ref{t:HamResGnp}. The road we take to achieve this is to show that given a typical graph $G$ sampled from $\GNP$, no matter how $H\subseteq G$ is chosen (given it satisfies the conditions on its degree sequence), not only will the graph $G-H$ contain a sparse expander subgraph $\Gamma$, it will also have as edges enough boosters with respect to $\Gamma$ as to transform it into a Hamiltonian graph.
\begin{proof}[Proof of Theorem \ref{t:HamResGnp}]
Fix $\varepsilon>0$, set $\beta=\beta(\varepsilon)$ to be a sufficiently small constant such that the assertion of Corollary \ref{c:GNP_contains_sparse_L} holds, and let $G\sim\GNP$ be with degree sequence $\bd$. If $G$ does not satisfy the conclusion of Corollary \ref{c:GNP_contains_sparse_L} we say that $G$ is \emph{corrupted}, and denote by $\mA_G$ this event of probability $o(1)$. 

Assume, then, that $G$ is not corrupted and \emph{not} $\wbd=\wbd(\frac{np}{100},\varepsilon)$-resilient to $\Hamiltonicity$, i.e. there exists a subgraph $H_0$ with degree sequence $\bd_H\leq\wbd$ for which $G-H_0$ is not Hamiltonian. Corollary \ref{c:GNP_contains_sparse_L} implies $G-H_0$ contains a subgraph $\Gamma_0\in\mL(n,\beta)$ which spans at most $2\cdot 10^6\beta n^2p$ edges, and moreover, adding to it any subset of edges from $E_1\subseteq E(G)$, results in a graph $\Gamma_0+E_1\in\mL(n,\beta)$. Lemma \ref{l:boosters2Hamiltonian} implies that there must exist a set $E_0\subseteq E(G-H)\subseteq E(G)$ of at most $n$ edges for which $|N_{G-(\Gamma_0+E_0)}(v)\cap\mB_{\Gamma_0+E_0}(v)|\leq\widetilde{d}_v$ for every vertex $v\in V$. As $|E_0|\ll e(\Gamma_0)$ from the above we conclude that $\Gamma_2=\Gamma_0+E_0\in\mL'(n,\beta)=\{\Gamma\in\mL(n,\beta)\;:\;|E(\Gamma)|\leq 10^7\beta n^2p\}$. 

Corollary \ref{c:GNP_contains_sparse_L} guarantees that $V=V_1\cup \mD_t(G)$ is a partition of the vertex set for which $\Gamma_2$ satisfies the properties of $\mL(n,\beta)$. Lemma \ref{l:nEpsExpander} implies the set $A=\{v\in V_1: |B_{\Gamma_2}(v)|\geq n(1/3+\beta)\}$ must satisfy $|A|\geq n(1/3+\beta)$. Let $A_0\subseteq A$ be a subset of cardinality $|A_0|=\frac{n}{3}$.

So, in fact, for non-corrupted $G$ we will resort to bound the event that there exists a graph $\Gamma_2\in\mL'$ that is contained in $G$ for which $|N_{G-\Gamma_2}(v)\cap\mB_{\Gamma_2}(v)|\leq\widetilde{d}_v=\left(\frac{1}{3}-\varepsilon\right)d_v$ for every vertex $v\in A_0$. One should note that from the independence of the appearance of edges in the $\GNP$ model, given some $\Gamma_2\in\mL'$ the events $\left[\Gamma_2\subseteq G\right]$ and $\left[|N_{G-\Gamma_2}(v)\cap\mB_{\Gamma_2}(v)|\leq\widetilde{d}_v\right]$ are independent as they stem from the appearance of disjoint sets of edges, i.e. $e(\Gamma_2)$ and $\binom{V}{2}\setminus e(\Gamma_2)$ respectively. Putting it all together yields that the probability that $G$ is not $\wbd$-resilient to $\Hamiltonicity$ is upper bounded by
\begin{eqnarray}
\nonumber&&\Prob{\mA_G} + \cProb{\exists\Gamma_2\in\mL'\;.\;\left(\Gamma_2\subseteq G\right) \wedge \left(\forall v\in V\;.\;|N_{G-\Gamma_2}(v)\cap\mB_{\Gamma_2}(v)|\leq\widetilde{d}_v\right)}{\overline{\mA_G}}\nonumber\\
\nonumber & \leq & o(1)+\frac{1}{\Prob{\overline{\mA_G}}}\sum_{\Gamma_2\in\mL'}\Prob{\Gamma_2\subseteq G}\cdot\Prob{\forall v\in V\;.\;|N_{G-\Gamma_2}(v)\cap\mB_{\Gamma_2}(v)|\leq\widetilde{d}_v}\nonumber\\
\nonumber&\leq & o(1)+(1+o(1))\sum_{\Gamma_2\in\mL'}p^{e(\Gamma_2)}\times\\
&&\Prob{\sum_{v\in A_0}|N_{G-\Gamma_2}(v)\cap\mB_{\Gamma_2}(v)|\leq\left(\frac{1}{3}-\varepsilon\right)\left(2e_G(A_0) + e_G(A_0,V\setminus A_0)\right)}.\label{e:ResHamUpBound}
\end{eqnarray}
Taking into account that we are using a union bound argument by summing over all graphs $\Gamma_2\in\mL'$ (and there may be an exponential number of those) it is left to show that every summand in the above expression is exponentially small. We define the following random variable (which depends on the choice of $\Gamma_2)$. Let 
$$X=X(G)=\sum_{v\in A_0}|N_{G-\Gamma_2}(v)\cap B_{\Gamma_2}(v)|,$$
whose expectation satisfies
$$\Exp{X}=\sum_{v\in A_0}\Exp{|N_{G-\Gamma_2}(v)\cap B_{\Gamma_2}(v)|}=p\cdot\sum_{v\in A_0}\Exp{|B_{\Gamma_2}(v)|}\geq\frac{n^2p}{9}.$$
Note that for any two graphs on the vertex set $V$ that differ by a single edge, their value of $X$ can change by at most $2$ ($1$ for every endpoint of the edge), hence we can apply the Azuma-Hoeffding inequality for martingales of bounded variance (see e.g. \cite[Theorem 7.4.3]{AloSpe2008}), to prove that $X$ is concentrated around its expectation. In the process of ``exposing'' the edges of the graph, it suffices to expose only the pairs with an endpoint in $A_0$ which are non-edges of $\Gamma_2$. This implies that the total variance of the martingale is upper bounded by $\frac{n^2}{3}p(1-p)$, and hence
\begin{equation}\label{e:Xuppbound}
\Prob{X(G)\leq\frac{n^2p}{9}(1-\varepsilon)}\leq \exp\left(-\Theta(\varepsilon^2n^2p)\right).
\end{equation}
On the other hand, a standard application of Theorem \ref{t:Chernoff} (the Chernoff bound) it follows that
\begin{equation}\label{e:NoDenseSet} 
\Prob{\exists U\subseteq V.\quad |U|=\frac{n}{3}\quad\wedge\quad 2e(U)+e(U,V\setminus U)>\frac{n^2p}{3}(1+\varepsilon)} = \exp\left(-\Theta(\varepsilon^2n^2p)\right).
\end{equation}
Using the fact that $\frac{n^2p}{9}(1-\varepsilon)\geq \frac{n^2p}{3}(\frac{1}{3}-\varepsilon)(1+\varepsilon)$, it follows from \eqref{e:Xuppbound} and \eqref{e:NoDenseSet} that
$$\Prob{X\leq\left(\frac{1}{3}-\varepsilon\right)\left(2e_G(A_0) + e_G(A_0,V\setminus A_0)\right)}\leq\exp\left(-\Theta(\varepsilon^2n^2p)\right).$$
Returning to our upper bound on the probability that $G$ is not $\wbd$-resilient to $\Hamiltonicity$, we set $\mu=10^7\beta$. Plugging in the above in \eqref{e:ResHamUpBound} the probability is upper bounded by
\begin{eqnarray*}
&& o(1) +(1+o(1))\sum_{m=1}^{\mu n^2p}\binom{\binom{n}{2}}{m}\cdot p^m\cdot\exp(-\Theta(\varepsilon^2n^2p))\\
&\leq & o(1) + (1+o(1))\sum_{m=1}^{\mu n^2p}\left(\frac{en^2p}{2m}\right)^m\cdot\exp(-\Theta(\varepsilon^2n^2p))\\
&\leq & o(1) + \exp(\Theta\left(\mu n^2p\ln\frac{1}{\mu}\right)-\Theta(\varepsilon^2n^2p)) = o(1),
\end{eqnarray*}
where from the second to the second inequality follows from the fact that $\left(\frac{en^2p}{m}\right)^m$ is increasing with $m$ for the given range, and that $\beta$ (and $\mu$) can be chosen small enough with respect to $\varepsilon$. This completes the proof of the theorem.
\end{proof}
\section{Concluding remarks and further research directions}\label{s:Conclusion}
This work is yet another building block in the recently initiated research area of resilience of graph properties. The generalized approach allowed us to tackle in a uniform way two different problems regarding Hamiltonicity. The main motivation for studying resilience of the type considered in this work is the ability to have refined control over vertices of small degree which create the main obstacle for Hamiltonicity when $p$ is in the low end of the range. As $p$ grows the degree sequence of the graph becomes more and more balanced, so this approach does not seem to be suitable (or even necessary) for larger values of $p$. Although in this work some progress has been made on the two fronts (the local resilience of random graphs with respect to $\Hamiltonicity$ and optimal packing of Hamilton cycles in random graphs), there are still gaps to fill in order to settle the two main questions considered in this paper, and we believe new ideas will be needed to resolve them completely.
\bibliographystyle{abbrv}
\bibliography{ImprovedLocRes}
\end{document}